\numberwithin{equation}{section}
\begin{document}

\renewcommand{\theequation}{\thesection.\arabic{equation}}
\setcounter{secnumdepth}{2}
\newtheorem{theorem}{Theorem}[section]
\newtheorem{definition}[theorem]{Definition}
\newtheorem{lemma}[theorem]{Lemma}
\newtheorem{corollary}[theorem]{Corollary}
\newtheorem{proposition}[theorem]{Proposition}
%\newproof{Proof}{Proof}
\numberwithin{equation}{section}
\theoremstyle{definition}
\newtheorem{example}[theorem]{Example}
\title[Toric generalized K$\ddot{a}$hler structures. I]
{Toric generalized K$\ddot{a}$hler structures. I}

\author[Yicao Wang]{Yicao Wang}
\address
{Department of Mathematics, Hohai University, Nanjing 210098, China}
\maketitle

\baselineskip= 20pt
\begin{abstract}
This is a sequel of \cite{Wang}, which provides a general formalism for this paper. We mainly investigate thoroughly a subclass of toric generalized K$\ddot{a}$hler manifolds of symplectic type introduced by Boulanger in \cite{Bou}. We find torus actions on such manifolds are all \emph{strong Hamiltonian} in the sense of \cite{Wang}. For each such a manifold, we prove that besides the ordinary two complex structures $J_\pm$ associated to the biHermitian description, there is a \emph{third} canonical complex structure $J_0$ underlying the geometry, which makes the manifold toric K$\ddot{a}$hler. We find the other generalized complex structure besides the symplectic one is always a B-transform of a generalized complex structure induced from a $J_0$-holomorphic Poisson structure $\beta$ characterized by an anti-symmetric constant matrix. Stimulated by the above results, we introduce a \emph{generalized Delzant construction} which starts from a Delzant polytope with $d$ faces of codimension 1, the standard K$\ddot{a}$hler structure of $\mathbb{C}^d$ and an anti-symmetric $d\times d$ matrix. This construction is used to produce non-abelian examples of strong Hamiltonian actions.
\end{abstract}
\section{Introduction}
Generalized K$\ddot{a}$hler (GK) structures are the generalized complex (GC) analogue of K$\ddot{a}$hler structures in complex geometry. It first appeared in the guise of biHermitian geometry in physicists' attempt to find the most general 2-dimensional $N=(2,2)$ supersymmetric $\sigma$-models. It was M. Gualtieri who first developed GK geometry in \cite{Gu00} \cite{Gu1} and found it is actually equivalent to the biHermitan geometry physicists are interested in.

Nontrivial GK manifolds are not that easy to construct. In the literature, the most successful method is probably provided by deforming a usual K$\ddot{a}$hler structure by a holomorphic Poisson structure. In this respect, R. Goto proved in \cite{Go1} a stability theorem of GK structures, which implies that deforming the complex structure of a compact K$\ddot{a}$hler manifold by a holomorphic Poisson structure can always be accompanied by a compatible deformation of the symplectic structure such that the resulting structure is a nontrivial GK structure. Other general formalisms for constructing GK manifolds include GK reduction of \cite{BCG1} \cite{LT}, Hamiltonian deformations of \cite{Gu1} and more recently GK blow-ups in \cite{Du}.

 Goto's result is in nature an existence theorem and beyond the search of examples. In \cite{Bou}, Boulanger started to investigate toric GK structures of symplectic type on a toric symplectic manifold $(M, \Omega, \mathbb{T}, \mu)$ ($\mathbb{T}$ is a torus acting on $M$ in a Hamiltonian fashion, $\mu$ the moment map and the symplectic form $\Omega$ provides one of the two underlying GC structures); in particular he found a subclass of such structures called \emph{anti-diagonal} ones, which could be effectively described by a strictly convex function $\tau$ defined in the interior of the moment polytope $\Delta$ and an anti-symmetric constant matrix $C$. This is an extension of the beautiful Abreu-Guillemin theory which characterizes a toric K$\ddot{a}$hler structure in terms of such a convex function referred to as a \emph{symplectic potential}. The anti-symmetric matrix $C$ in Boulanger's result seems occasionally to represent a holomorphic Poisson deformation as demonstrated in \cite[Corollary 2]{Bou} at least at the infinitesimal level. So in this respect, Boulanger's study can be viewed as an explicit realization of Goto's general theorem.

Anti-diagonal toric GK structures of symplectic type are also the central objects of the present paper. However, our approach towards these structures draws some essential new ideas from the author's recent work \cite{Wang}. In \cite{Wang}, to find an analogue of GIT (Geometric invariant theory) quotient in the GK setting, the author introduced the notion of \emph{strong Hamiltonian action} of a compact Lie group and investigated several elementary properties of this notion. Actually it is to look for more examples of strong Hamiltonian action that brought Boulanger's work into our sight. We shall revisit anti-diagonal toric GK structures of symplectic type from the viewpoint of \cite{Wang}, which facilitates the \emph{global characterization} of such structures, in contrast to Boulanger's mainly local results.

 The \emph{main result} we obtain in this paper are as follows. \emph{i}) For an anti-diagonal toric GK structure of symplectic type, its underlying GC structure $\mathbb{J}_1$ (not the one induced from $\Omega$) is always a B-field transform of a GC structure $\mathbb{J}_\beta$ induced from a holomorphic Poisson structure $\beta$ characterized by the anti-symmetric constant matrix $C$. \emph{ii}) Conversely, any toric K$\ddot{a}$hler structure together with an anti-symmetric constant matrix $C$ always gives rise to an anti-diagonal GK structure of symplectic type. A weaker version of part \emph{ii}) was already obtained by Boulanger from a viewpoint of deformation, \emph{requiring that the involved matrix $C$ be sufficiently small}. We don't need this restriction and thus in this respect our result goes beyond Goto's stability theorem which is basically a result of deformation theory.

 The key point of the above result \emph{i}) is to determine which complex structure we should refer to when we say $\beta$ is holomorphic. Underlying the GK structure are two complex structures $J_\pm$, but neither of them is the correct choice. A clue to the solution did appear in Boulanger's work: the symplectic potential $\tau$ in \cite{Bou} does determine a toric K$\ddot{a}$hler structure at least on the open dense subset $\mu^{-1}(\mathring{\Delta})\subset M$, where $\mathring{\Delta}$ is the interior of $\Delta$. Boulanger did prove that this K$\ddot{a}$hler structure is globally well-defined on $M$ in real dimension 4.

 Another clue comes from \cite{Wang}. For a compact connected Lie group $K$, the author proved in \cite{Wang} that a strong Hamiltonian $K$-action on a compact GK manifold can always be \emph{generalized complexified} (see \S~\ref{str} for details) and a stable orbit of this action acquires a natural $K$-equivariant K$\ddot{a}$hler structure from the ambient space; in particular its complex structure actually stems from the complexified group $K_\mathbb{C}$. We prove in this paper that, the condition of being anti-diagonal is equivalent to the requirement that the torus action be strong Hamiltonian. In this context, $\mu^{-1}(\mathring{\Delta})$ is consequently an orbit of the underlying generalized complex $\mathbb{T}_\mathbb{C}$-action and thus obtains a complex structure $J_0$ in this fashion. We prove that $J_0$ coincides with the complex structure underlying Boulanger's work. Since $\mathbb{T}_\mathbb{C}$ acts in a global way on $M$, we believe that $J_0$ should be globally defined on $M$ in general.

 To prove $J_0$ is actually global, we have refined Boulanger's compactification conditions by replacing a positive-definiteness condition with a non-degeneracy one, which is technically much easier for use. We also draw upon some important observations from \cite{Wang}. It should also be \emph{emphasized} that, though from a symplectic viewpoint the torus action on $M$ is fairly classical, we choose to fit it in the more general setting of extended actions of \cite{BCG1} and \cite{Wang}. Without this, some underlying geometric structures won't reveal themselves naturally.

 The detailed structure of the paper is outlined as follows. The first two sections \S~2, \S~3 are devoted to recalling the basics of GC geometry and (strong) Hamiltonian actions. The main body of the paper is \S~\ref{main} and \S~\ref{GDC}. After briefly recalling Abreu-Guillemin theory and Boulanger's generalization in \S~\ref{AGB}, in the local theory developed in \S~\ref{loc} we first slightly generalize Boulanger's observation (Thm.~\ref{GKG}). After that we prove for toric GK structures of symplectic type, being anti-diagonal is equivalent to that the torus action is strong Hamiltonian (Thm.~\ref{cha}). In Thm.~\ref{coi} and Thm.~\ref{hol}, we prove that the two complex structures on $\mu^{-1}(\mathring{\Delta})$ mentioned before do coincide and $\mathbb{J}_1$ is actually a B-transform of a GC structure induced from a holomorphic Poisson structure $\beta$. In \S~\ref{Com}, the problem of compactification is addressed. After having refined Boulanger's compactification condition in Lemma~\ref{c1} and Corollaries~\ref{c2}, \ref{c3}, we finally prove that the complex structure $J_0$ is actually globally defined on $M$ and any given toric K$\ddot{a}$hler structure together with an anti-symmetric constant matrix can produce a toric GK structure. \S~\ref{sub} is included for completeness to see what happens to $M\backslash\mu^{-1}(\mathring{\Delta})$. It is mainly an application of Goto's observations in \cite{Go} to our specified setting. \S~\ref{ex} contains an explicit example on $\mathbb{C}P^1\times \mathbb{C}P^1$ to demonstrate the general theory. According to our theorems, the starting point of constructing a toric GK structure is a toric K$\ddot{a}$hler structure and a \emph{canonical} choice is thus the one produced by the famous Delzant construction naturally associated to a Delzant polytope \cite{Del}. We then prove in \S~\ref{GDC} that toric GK structures constructed from this canonical K$\ddot{a}$hler structure can be interpreted from the viewpoint of GK reduction (Thm.~\ref{gde}), just as Delzant's construction is a successful application of K$\ddot{a}$hler reduction. This leads to the introduction of our \emph{generalized Delzant construction}. This procedure is used in \S~\ref{non} to provide examples of non-abelian strong Hamiltonian actions. The appendix collects some facts on matrices which are elementary but frequently (sometimes implicitly) used in the main text.
\section{Basics of generalized geometry}
Though we will finally deal with only compact toric symplectic manifolds, we do need some general background to lay the foundation of our approach to toric GK manifolds of symplectic type. In this section, we collect the necessary material of generalized geometry. The basic references are \cite{Gu00} and \cite{Gu0}.

A Courant algebroid $E$ is a real vector bundle $E$ over a smooth manifold $M$, together with an anchor map $\pi$ to $TM$, a non-degenerate inner product $(\cdot, \cdot)$ and a so-called Courant bracket $[\cdot , \cdot]_c$ on $\Gamma(E)$. These structures should satisfy some compatibility axioms we won't mention here. $E$ is called exact, if the short sequence \[0\longrightarrow T^*M\stackrel{\pi^*}\longrightarrow E \stackrel{\pi}\longrightarrow TM \longrightarrow0\]
is exact. In this paper, we only deal with exact ones. Given $E$, one can always find an isotropic right splitting $s:TM\rightarrow E$, with a curvature form $H\in \Omega_{cl}^3(M)$ defined by
\[H(X,Y,Z)=([s(X),s(Y)]_c,s(Z)),\quad X, Y, Z\in \Gamma(TM).\]
  By the bundle isomorphism $s+\pi^*:TM\oplus T^*M\rightarrow E$, consequently the Courant algebroid structure can be transported onto $TM\oplus T^*M$. Then the inner product $(\cdot,\cdot)$ is the natural pairing, i.e.
$( X+\xi,Y+\eta)=\xi(Y)+\eta(X)$, and the Courant bracket is
\begin{equation}[X+\xi, Y+\eta]_H=[X,Y]+\mathcal{L}_X\eta-\iota_Yd\xi+\iota_Y\iota_XH,\label{Courant}\end{equation}
called the $H$-twisted Courant bracket. Different splittings are related by B-tranforms: \[e^B(X+\xi)=X+\xi+B(X).\]
\begin{definition}
 A GC structure on a Courant algebroid $E$ is a complex structure $\mathbb{J}$ on $E$ orthogonal w.r.t. the inner product and its $\sqrt{-1}$-eigenbundle $L\subset E_\mathbb{C}$ is involutive under the Courant bracket. We also say $\mathbb{J}$ is integrable in this case.
\end{definition}
Since $\mathbb{J}$ and its $\sqrt{-1}$-eigenbundle $L$ are equivalent notions, we shall occasionally use them interchangeably to denote a GC structure. For $H\equiv0$, ordinary complex and symplectic structures are extreme examples of GC structures. Precisely, for a complex structure $I$ and a symplectic structure $\Omega$, the corresponding GC structures are of the following form:
\[\mathbb{J}_I=\left(
                 \begin{array}{cc}
                   -I & 0 \\
                   0 & I^* \\
                 \end{array}
               \right),\quad \mathbb{J}_\Omega=\left(
                                                 \begin{array}{cc}
                                                   0 & \Omega^{-1} \\
                                                   -\Omega & 0 \\
                                                 \end{array}
                                               \right).
\]
 A nontrivial example beyond these is produced by a holomorphic Poisson structure $\beta$: Let $\beta$ be a holomorphic Poisson structure relative to a complex structure $J$ on $M$. Then $L_\beta:=T_{0,1}M\oplus (\textup{Id}+\beta)T^*_{1,0}M$ is a GC structure $\mathbb{J}_\beta$. The matrix form of $\mathbb{J}_\beta$ is
\[\left(
      \begin{array}{cc}
        -J & -4\textup{Im}\beta \\
        0 & J^* \\
      \end{array}
    \right),
\]
where $\textup{Im}\beta$ is the imaginary part of $\beta$.
\begin{definition}A generalized metric on a Courant algebroid $E$ is an orthogonal, self-adjoint operator $\mathcal{G}$ such that $( \mathcal{G}\cdot,\cdot)$ is positive-definite on $E$. It is necessary that $\mathcal{G}^2=id$.
 \end{definition}
 A generalized metric induces a \emph{canonical} isotropic splitting: $E=\mathcal{G}(T^*M)\oplus T^*M$. It is called \emph{the metric splitting}. Given a generalized metric, we shall almost always choose its metric splitting to identify $E$ with $TM\oplus T^*M$. Then $\mathcal{G}$ is of the form $\left(\begin{array}{cc} 0 & g^{-1} \\g & 0 \\
\end{array} \right)$ where $g$ is an ordinary Riemannian metric.

  A generalized metric is an ingredient of a GK structure.
\begin{definition}
A GK structure on $E$ is a pair of commuting GC structures $(\mathbb{J}_1,\mathbb{J}_2)$ such that $\mathcal{G}=-\mathbb{J}_1 \mathbb{J}_2$ is a generalized metric.
\end{definition}
 A GK structure can also be reformulated in a biHermitian manner: There are two complex structures $J_\pm$ on $M$ compatible with the metric $g$ induced from the generalized metric. Let $\omega_\pm=gJ_\pm$. Then \emph{in the metric splitting} the GC structures and the corresponding biHermitian data are related by the Gualtieri map:
 \[\mathbb{J}_1=\frac{1}{2}\left(
  \begin{array}{cc}
    -J_+-J_-& \omega_+^{-1}-\omega_-^{-1} \\
    -\omega_++\omega_- & J_+^*+J_-^* \\
  \end{array}
\right),\quad \mathbb{J}_2=\frac{1}{2}\left(
  \begin{array}{cc}
    -J_++J_-& \omega_+^{-1}+\omega_-^{-1} \\
    -\omega_+-\omega_- & J_+^*-J_-^* \\
  \end{array}
\right).\]
Note that $\beta_1:=-\frac{1}{2}(J_+-J_-)g^{-1}$ and $\beta_2:=-\frac{1}{2}(J_++J_-)g^{-1}$ are actually real Poisson structures associated to $\mathbb{J}_{1}$ and $\mathbb{J}_{2}$ respectively.

If $\mathbb{J}_2$ is a B-transform of a GC structure $\mathbb{J}_\Omega$ induced from a symplectic form $\Omega$, the GK manifold $(M, \mathbb{J}_1, \mathbb{J}_2)$ is said to be \emph{of symplectic type}. It is known from \cite{En} that for a given symplectic form $\Omega$, compatible GC structures $\mathbb{J}_1$ which, together with a B-transform of $\mathbb{J}_\Omega$, form GK structures on $M$ are in one-to-one correspondence with \emph{tamed} integrable complex structures $J$ on $M$ such that its symplectic adjoint $J^{\Omega}:=-\Omega^{-1}J^*\Omega$ is also integrable. This fact greatly facilitates the study of GK structures of symplectic type. Precisely, in the metric splitting, if we set
\[\frac{1}{2}\left(
  \begin{array}{cc}
    -J_++J_-& \omega_+^{-1}+\omega_-^{-1} \\
    -\omega_+-\omega_- & J_+^*-J_-^* \\
  \end{array}
\right)=\left(\begin{array}{cc}
1 & 0\\
-b & 1\\\end{array}\right)\left(\begin{array}{cc}
0 & \Omega^{-1}\\
-\Omega & 0\\\end{array}\right)\left(\begin{array}{cc}
1 & 0\\
b & 1\\\end{array}\right),\]
then it can be simply obtained that
\begin{equation}J_-=J_+^\Omega=-\Omega^{-1} J_+^*\Omega,\quad g=-\frac{1}{2}\Omega (J_++J_-),\quad b=-\frac{1}{2}\Omega (J_+-J_-).\label{sy}\end{equation}
These are basic identities we will use frequently later. Recall that $J_+$ is tamed with $\Omega$ in the sense that the symmetric part of $-\Omega J_+$ is a Riemannian metric on $M$.
\section{Strong Hamiltonion actions and GK quotients}\label{str}
In this section, we recall the notion of (strong) Hamiltonian actions in the GK setting and some relevant results from \cite{LT} \cite{Wang}. The exposition here is slightly different from that of \cite{LT} in that we follow the spirit of \cite{BCG1} and stick to the metric splitting if a GK structure is in place.

Let $K$ be a connected Lie group (with Lie algebra $\mathfrak{k}$) acting on the manifold $M$ from the left. Then the infinitesimal action $\varphi_0:\mathfrak{k}\rightarrow \Gamma(TM)$ is a Lie algebra homomorphism. Since in generalized geometry $TM$ is replaced by a Courant algebroid $E$, we would like to lift the $\mathfrak{k}$-action to $E$.
\begin{definition}A map $\varphi: \mathfrak{k}\rightarrow \Gamma(E)$ covering $\varphi_0$ is called an isotropic trivially extended $\mathfrak{k}$-action if i) $\varphi$ is isotropic, i.e. the image of $\varphi$ is isotropic pointwise w.r.t. the inner product and ii) $\varphi$ preserves the brackets, i.e. $\varphi([\varsigma,\zeta])=[\varphi(\varsigma), \varphi(\zeta)]_c$ for $\varsigma, \zeta\in \mathfrak{k}$.
\end{definition}

If this action integrates to a $K$-action on $E$, we call it an isotropic trivially extended $K$-action on $M$. There is a fairly general theory of extended $K$-actions in \cite{BCG1}, but we don't need such generality. \emph{In the rest of the paper, when referring to an extended $\mathfrak{k}$-action (or $K$-action), we always mean an isotropic trivially extended one.}

Given a splitting of $E$ preserved by the extended action, the infinitesimal action can be written in the form $\varphi(\varsigma)=X_\varsigma+\xi_\varsigma$, where $X_\varsigma$ is the vector field generated by $\varsigma$ and $\xi_{(,)}:\mathfrak{k}\rightarrow \Omega^1(M)$ is a $\mathfrak{k}$-equivariant map such that
\begin{equation}\xi_\varsigma(X_\zeta)+\xi_\zeta(X_\varsigma)=0,\quad d\xi_\varsigma=\iota_{X_\varsigma}H.\end{equation}

If the underlying $K$-action on $M$ of an extended $K$-action is proper and free, the Courant algebroid $E$ descends to the quotient $M/K$. Precisely, let $\mathbb{K}$ be the subbundle of $E$ generated by the image of $\varphi$ and $\mathbb{K}^\bot\subset E$ the orthogonal of $\mathbb{K}$ w.r.t. the inner product. Then $\mathbb{K}\subset \mathbb{K}^\bot$ and we can obtain a Courant algebroid $E_{red}:=\frac{\mathbb{K}^\bot}{\mathbb{K}}/K$ whose Courant bracket and inner product can be derived from the Courant bracket of $K$-invariant sections of $E$. If $\mathcal{D}\subset E_\mathbb{C}$ is a Dirac structure (involutive maximal isotropic subbundle), then $\mathcal{D}$ descends to a Dirac structure on the quotient under good conditions, e.g. $\mathcal{D}\cap \mathbb{K}_\mathbb{C}$ has constant rank. The reduced version of $\mathcal{D}$ is $\frac{\mathcal{D}\cap \mathbb{K}_\mathbb{C}^\bot+\mathbb{K}_\mathbb{C}}{\mathbb{K}_\mathbb{C}}/K$.

Let a \emph{compact} connected Lie group $K$ act on a GK manifold $(M, \mathbb{J}_1, \mathbb{J}_2)$ in the extended manner, preserving the GK structure on $M$ and consequently the metric splitting. The notion of (generalized) moment map can be defined in the more general context of GC manifolds. As a convention, when referring to a moment map in the GK setting, we always mean one associated to $\mathbb{J}_2$.
\begin{definition}(\cite{LT} \cite{Wang}) Let $M$ be a GK manifold carrying an extended $K$-action $\varphi$ preserving the underlying GK structure. An equivariant map $\mu: M\rightarrow \mathfrak{k}^*$ is called a moment map, if
\begin{equation}\mathbb{J}_2(X_\varsigma+\xi_\varsigma-\sqrt{-1}d\mu_\varsigma)=\sqrt{-1}(X_\varsigma+\xi_\varsigma-\sqrt{-1}d\mu_\varsigma)\label{gmm}\end{equation}
for any $\varsigma\in \mathfrak{k}$. If this happens, the action is called Hamiltonian. If additionally $\beta_1(d\mu_\varsigma, d\mu_\zeta)=0$ for any $\varsigma, \zeta\in \mathfrak{k}$, we say the action is strong Hamiltonian. Note that $\beta_1$ is the Poisson structure associated to $\mathbb{J}_1$.
\end{definition}
A more friendly reformulation of the strong Hamiltonian condition $\beta_1(d\mu_\varsigma, d\mu_\zeta)=0$ is $(\mathbb{J}_1(X_\varsigma+\xi_\varsigma), X_\zeta+\xi_\zeta)=0$, which is motivated by the attempt to complexify the extended $\mathfrak{k}$-action at the infinitesimal level \cite{Wang}.

\begin{example}Let $(M, \mathbb{J}_1, \mathbb{J}_2)$ be a GK manifold of symplectic type where $\mathbb{J}_2$ is a B-transform of $\mathbb{J}_\Omega$ induced from a symplectic form $\Omega$. If $(M, \Omega)$ is equipped with an ordinary Hamiltonian $K$-action which also preserves the underlying complex structure $J_+$ (consequently the whole GK structure), then \emph{in the metric splitting}, this action involves a cotangent part and becomes an extended one. Precisely, at the infinitesimal level, for $\varsigma\in \mathfrak{k}$ we have $\varphi(\varsigma)=X_\varsigma-b(X_\varsigma)$,
where $b$ is the 2-form $b=-1/2\Omega(J_+-J_-)$. This extended action is obviously a Hamiltonian one. This example is actually the only one we are finally concerned with.
\end{example}

All Hamiltonian K$\ddot{a}$hler manifolds are strong Hamiltonian for the relevant Poisson structure $\beta_1$ is zero. Due to dimensional reason, the first nontrivial examples of strong Hamiltonian actions are provided by Hamiltonian $S^1$-actions on GK manifolds. One of our motivations in this paper is to present some more complicated examples.

The importance of the notion of strong Hamiltonian actions lies in the fact that the group actions can be \emph{generalized complexified} in a fashion compatible with the underlying geometry and one can develop the GIT aspect of GK reduction \cite{Wang}. A sketch of this theory goes as follows:

Eq.~(\ref{gmm}) is equivalent to the following equation
\begin{equation}
J_+X_\varsigma^+=J_-X_\varsigma^-=-g^{-1}d\mu_\varsigma,\label{cent}
\end{equation}
where $X_\varsigma^\pm=X_\varsigma\pm g^{-1}\xi_\varsigma$. Let $Y_\varsigma=J_+X_\varsigma^+$. It's easy to show $\mathbb{J}_1(X_\varsigma+\xi_\varsigma)=-Y_\varsigma$ and we can define the action of $\mathfrak{k}_\mathbb{C}=\mathfrak{k}+\sqrt{-1}\mathfrak{k}$ in a manner similar to the ordinary case of complexifying a holomorphic action of a compact group on a complex manifold:
\[\varphi_\mathbb{C}(\varsigma+\sqrt{-1}\zeta)=X_\varsigma+\xi_\varsigma-\mathbb{J}_1(X_\zeta+\xi_\zeta)=X_\varsigma+\xi_\varsigma+Y_\zeta.\]
That the extended $K$-action is strong Hamiltonian assures that the above map is really a Lie algebra homomorphism from $\mathfrak{k}_\mathbb{C}$ to $E$. Under suitable conditions (e.g. $M$ is compact), this Lie algebra action can integrate to an extended $K_\mathbb{C}$-action on $M$. We shall call this action a GC action to emphasize its origin. The GC action will preserve $\mathbb{J}_1$ and its associated Poisson structure $\beta_1$.

For simplicity, let $M$ be compact and $K$ act on $\mu^{-1}(0)$ freely, then as Lin-Tolman showed in \cite{LT}, the quotient $\mu^{-1}(0)/K$ acquires a natural GK structure in a manner similar to the classical K$\ddot{a}$hler reduction. On the other hand, $\mathcal{Z}:=K_\mathbb{C}\mu^{-1}(0)$ is an open submanifold of $M$ and inherits a GC structure $\mathbb{J}_1|_{\mathcal{Z}}$. A surprising result in \cite{Wang} is that each $K_\mathbb{C}$-orbit $\mathcal{O}\subset\mathcal{Z}$ acquires a natural Hamiltonian $K$-equivariant K$\ddot{a}$hler structure from the ambient space in the following fashion: on $\mathcal{O}$ one can simply define a complex structure $J_0$ by letting $J_0 X_\varsigma=Y_\varsigma$ and define a metric $\tilde{g}$ by letting
\begin{equation}\tilde{g}(X_\varsigma, X_\zeta)=\tilde{g}(Y_\varsigma, Y_\zeta)=g(X_\varsigma^+, X_\zeta^+),\quad \tilde{g}(X_\varsigma, Y_\zeta)=g(X_\varsigma, Y_\zeta).\label{kah}\end{equation}
$J_0$ is actually the complex structure induced from the standard one on $K_\mathbb{C}$ and $\tilde{g}$ is the restriction of the generalized metric $\mathcal{G}$ on the bundle $\mathbb{K}\oplus \mathbb{J}_1\mathbb{K}$. The quotient of $(\mathcal{Z}, \mathbb{J}_1|_{\mathcal{Z}})$ by the GC action of $K_\mathbb{C}$ is a GC manifold and can be viewed as the GIT quotient in the GK setting. In \cite{Wang}, the author has proved that this quotient can be identified with the GK quotient in the sense of \cite{LT} and the reduced version of $\mathbb{J}_1|_\mathcal{Z}$ is precisely the one obtained from Lin-Tolman's story. This is a natural extension of the well-known GIT aspect of K$\ddot{a}$hler reduction saying that the GIT quotient coincides with the symplectic quotient \cite{Kir}.
\section{Anti-diagonal toric GK structures of symplectic type}\label{main}
\subsection{Abreu-Guillemin theory and Boulanger's generalization}\label{AGB}
Let us recall briefly the Abreu-Guillemin theory first. A recent fairly readable account on this topic can be found in \cite{Ap}.
\begin{definition}A toric symplectic manifold $(M, \Omega, \mathbb{T}, \mu)$ of dimension $2n$ is a symplectic manifold $(M, \Omega)$ with an effective and Hamiltonian action of the $n$-dimensional torus $\mathbb{T}=\mathbb{T}^n$. Note that here $\mu$ is the moment map.
\end{definition}
Let $(M,\Omega, \mathbb{T}, \mu)$ be a compact toric symplectic manifold and $\mathfrak{t}\cong \mathbb{R}^n$ the Lie algebra of $\mathbb{T}$. By the famous convexity theorem of Atiyah-Guillemin-Sternberg \cite{At} \cite{GS}, the image $\Delta$ of $\mu$ is a polytope in $\mathfrak{t}^*=(\mathbb{R}^n)^*$ which is the convex hull of the image of fixed points of the torus action. $\Delta$ is thus called the \emph{moment polytope}. A famous theorem of Delzant states that compact toric symplectic manifolds are classified by their moment polytopes $\Delta$ up to equivariant symplectomorphism \cite{Del}.

Given a compact toric symplectic manifold $(M,\Omega, \mathbb{T}, \mu)$, Guillemin in \cite{Gu1} showed that compatible $\mathbb{T}$-invariant K$\ddot{a}$hler structures are actually also determined by data specified on the moment polytope $\Delta$. The theory is partially sketched below.

Let $\mathring{\Delta}$ be the interior of $\Delta$. Then $\mathring{M}:=\mu^{-1}(\mathring{\Delta})$ consists of points at which $\mathbb{T}$ acts freely. Topologically, $\mathring{M}=\mathring{\Delta}\times \mathbb{T}$, i.e. a trivial principal $\mathbb{T}$-bundle $\mu: \mathring{M}\rightarrow\mathring{\Delta}$. Denote the set of $\mathbb{T}$-invariant complex structures on $M$ compatible with $\Omega$ by $K_\Omega^{\mathbb{T}}(M)$, i.e. the set of toric K$\ddot{a}$hler structures on $M$. Let $I\in K_\Omega^{\mathbb{T}}(M)$ and $\{X_j\}$ be the fundamental vector fields corresponding to a basis $\{e_j\}$ of $\mathfrak{t}$. Then $\{X_j, IX_j\}$ forms a global frame of $T\mathring{M}$ and the Lie bracket of any two vector fields in this frame vanishes. Let $\{\zeta_j,\vartheta_j\}$ be the dual frame on $T^*\mathring{M}$. Then $d\zeta_j=d\vartheta_j=0$ and hence locally $\zeta_j=d\theta_j$ and $\vartheta_j=du_j$. $\{\theta_j+\sqrt{-1}u_j\}$ is thus a local holomorphic coordinate system on $\mathring{M}$ (actually, these $u_j$'s are globally defined on $\mathring{M}$ due to the fact that $\mathring{\Delta}$ is simply connected). On the other side, $\{\vartheta_j\}$ and $\{d\mu_j\}$ determine the same integrable Lagrangian distribution $\mathcal{D}$ generated by $\{X_j\}$ and thus these $u_j$'s are all functions depending only on $\mu$, i.e.
\begin{equation}du_j=-\sum_{k=1}^n\phi_{kj}(\mu)d\mu_k,\label{comp}\end{equation}
or \footnote{As a convention, we have written $d\theta_j$'s or $d\mu_j$'s in a column. Similar notation is used below.}
\[I^*\left(
       \begin{array}{c}
         d\theta \\
         d\mu \\
       \end{array}
     \right)=\left(
               \begin{array}{cc}
                 0 & \phi \\
                 -\phi^{-1} & 0 \\
               \end{array}
             \right)\left(
                      \begin{array}{c}
                        d\theta \\
                        d\mu \\
                      \end{array}
                    \right).\]
It is easy to check that $\theta_j, \mu_j$ are Darboux coordinates, i.e. on $\mathring{M}$
\[\Omega=\sum_{j=1}^nd\mu_j\wedge d\theta_j.\]
Compatibility of $I$ with $\Omega$ forces the matrix $\phi=(\phi_{kj})$ to be symmetric and positive-definite, and integrability of Eq.~(\ref{comp}) implies that $\phi$ must be the Hessian of a function $\tau$ defined on $\mathring{\Delta}$ (positive-definiteness of $\phi$ then means that $\tau$ is strictly convex). Due to the cental role of $\tau$, it is called the \emph{symplectic potential} of the invariant K$\ddot{a}$hler structure $I$, \footnote{By abuse of language, we call $I$ a K$\ddot{a}$hler structure for $I$ completely determines the K$\ddot{a}$hler structure in our present setting.} which provides a very practical computational tool in examining geometric ideas. Some distinguished applications can be found in \cite{Ab}.

Boulanger's generalization went in a similar spirit. He considered $\mathbb{T}$-invariant GK structures $(\mathbb{J}_1, \mathbb{J}_2)$ of symplectic type on $(M,\Omega, \mathbb{T}, \mu)$, where  $\mathbb{J}_2$ is a B-transform of $\mathbb{J}_\Omega$. The above $I$ is replaced by $J_+$ underlying the biHermitian description. However, the relaxed condition of tameness no longer ensures that $\theta_j, \mu_j$ be Darboux coordinates in general. Boulanger then introduced a more restrictive subclass to reserve this property. Denote the space of $\mathbb{T}$-invariant GK structures of symplectic type by $GK_\Omega^{\mathbb{T}}(M)$. Then an element of $GK_\Omega^{\mathbb{T}}(M)$ is called \emph{anti-diagonal} if for the underlying complex structures $J_\pm$ the following condition holds:
\begin{equation}J_+\mathcal{D}=J_-\mathcal{D},\end{equation}
where $\mathcal{D}$ is again the Lagrangian distribution generated by $\{X_j\}$.

Before proceeding further, let us introduce some notation. Following Boulanger, we denote this subset of anti-diagonal elements in $GK_\Omega^{\mathbb{T}}(M)$ by $DGK_\Omega^{\mathbb{T}}(M)$. Since an element in $GK_\Omega^{\mathbb{T}}(M)$ is completely parameterized by its associated complex structure $J_+$, we usually write $J_+\in GK_\Omega^{\mathbb{T}}(M)$ to imply this fact. Sometimes we also write $\mathbb{J}_1\in GK_\Omega^{\mathbb{T}}(M)$ if we want to emphasize the GC aspect of the underlying structures.

For a $J_+\in DGK_\Omega^{\mathbb{T}}(M)$, $\theta_j, \mu_j$ are again Darboux coordinates (Boulanger called them \emph{admissible coordinates associated to $J_+$}) and with such coordinates $J_\pm$ are of a form similar to Abreu-Guillemin's case
\begin{equation}J_+^*\left(
       \begin{array}{c}
         d\theta \\
         d\mu \\
       \end{array}
     \right)=\left(
               \begin{array}{cc}
                 0 & \phi^T \\
                 -(\phi^{-1})^T & 0 \\
               \end{array}
             \right)\left(
                      \begin{array}{c}
                        d\theta \\
                        d\mu \\
                      \end{array}
                    \right),\quad J_-^*\left(
       \begin{array}{c}
         d\theta \\
         d\mu \\
       \end{array}
     \right)=\left(
               \begin{array}{cc}
                 0 & \phi \\
                 -\phi^{-1} & 0 \\
               \end{array}
             \right)\left(
                      \begin{array}{c}
                        d\theta \\
                        d\mu \\
                      \end{array}
                    \right)
\label{J+-}\end{equation}
except that $\phi$ is not necessarily symmetric. Note that here $\phi^T$ denotes the transpose of $\phi$. Integrability of $J_\pm$ then forces the symmetric part $\phi_s$ ($=(\phi+\phi^T)/2$) of $\phi$ to be the Hessian of a function $\tau$ on $\mathring{\Delta}$ and the anti-symmetric part $\phi_a$ ($=(\phi-\phi^T)/2$) to be a constant anti-symmetric $n\times n$ matrix. Tameness then is simply equivalent to that $\tau$ is strictly convex. A detailed argument can be found in the next subsection in a more general setting. \emph{Another fact we should emphasize is that $J_+\in GK_\Omega^{\mathbb{T}}(M)$ lies in $DGK_\Omega^{\mathbb{T}}(M)$ if and only if for certain admissible coordinates, the matrix form of $J_+$ is anti-diagonal.}

If the matrix form of $J_+\in DGK_\Omega^{\mathbb{T}}(M)$ is anti-diagonal w.r.t. the admissible coordinates $\theta_j, \mu_j$ associated to a reference element $I_+\in DGK_\Omega^{\mathbb{T}}(M)$, for later use and also for the reader's convenience, we collect below the matrix forms of the underlying several geometric structures viewed as linear maps:
\[J_+\sim
\left(
  \begin{array}{cc}
    0 & -\phi^{-1} \\
    \phi  & 0 \\
  \end{array}
\right),\quad J_-\sim
\left(
  \begin{array}{cc}
    0 & -(\phi^{-1})^T \\
    \phi^T  & 0 \\
  \end{array}
\right),\quad g\sim\left(
               \begin{array}{cc}
                 (\phi^{-1})_s & 0 \\
                 0 & \phi_s \\
               \end{array}
             \right),
\]
\[b\sim\left(
               \begin{array}{cc}
                 (\phi^{-1})_a & 0 \\
                 0 & \phi_a \\
               \end{array}
             \right),\quad
     \Omega\sim\left(
               \begin{array}{cc}
                 0 & -\textup{I} \\
                 \textup{I} & 0 \\
               \end{array}
             \right),\quad \beta_1\sim\left(
                              \begin{array}{cc}
                                0 & -\phi_a(\phi_s)^{-1} \\
                                -(\phi_s)^{-1}\phi_a & 0 \\
                              \end{array}
                            \right).
                    \]
These are computed relative to $\{d\theta_j, d\mu_j\}$ and $\{\partial_{\theta_j}, \partial_{\mu_j}\}$, and "$\textup{I}$" denotes the $n\times n$ identity matrix. It should be pointed out that due to our notation, the matrix form of the composition $RS$ of two maps $R$ and $S$ is $\hat{S}\hat{R}$ rather than $\hat{R}\hat{S}$, if $\hat{R}$, $\hat{S}$ are the matrix forms of $R, S$ respectively. Another fact we shall mention here is that by abuse of language, we will not distinguish $\mathbb{T}$-invariant smooth functions on $M$ (or $\mathring{M}$) from smooth functions on $\Delta$ (or $\mathring{\Delta}$) as is often done in the literature.

\subsection{Local theory}\label{loc}
Given a compact toric symplectic manifold $(M,\Omega, \mathbb{T}, \mu)$, in the following three subsections, we set it our goal to characterize elements in $DGK_\Omega^{\mathbb{T}}(M)$ step by step. To familiarize the reader with the technical background, let us begin with a slight generalization of Boulanger's observation.

 Fix a basis $\{e_j\}$ of $\mathfrak{t}$ and let $\{\mu_j\}$ be the corresponding components of $\mu$. Note again that $\mathring{M}$ is a trivial principal $\mathbb{T}$-bundle over $\mathring{\Delta}$. Let $\zeta=\sum_j\zeta_je_j$ be a flat connection of this $\mathbb{T}$-bundle. Due to the fact that the vertical distribution is Lagrangian, we must have a 1-form $\sigma_\zeta=\sum_j h_jd\mu_j$ with $h_j$ depending only on $\mu$ such that
\[\Omega=\sum_j d\mu_j\wedge \zeta_j+d\sigma_\zeta.\]
We call the matrix $F_\zeta:=(h_{k,j}-h_{j,k})$ the associated matrix of the connection $\zeta$. Obviously, $F_\zeta$ is determined by $\zeta$. If $F_\zeta$ happens to be a constant matrix, we say $\zeta$ is an \emph{admissible connection}.

\begin{theorem}\label{GKG}An element $J_+\in GK_\Omega^{\mathbb{T}}(\mathring{M})$ is determined by a triple $(\zeta,\tau, C)$ where $\zeta$ is an admissible connection on $\mathring{M}$, $C$ is an anti-symmetric $n\times n$ constant matrix and $\tau$ is a strictly convex function on $\mathring{\Delta}$ such that its Hessian $\phi_s$ makes the matrix \[\phi_s+\frac{1}{4}F_\zeta(\phi_s)^{-1}F_\zeta\] positive-definite. Conversely, such a triple $(\zeta,\tau, C)$ also gives rise to an element in $GK_\Omega^{\mathbb{T}}(\mathring{M})$.\label{GK}
\end{theorem}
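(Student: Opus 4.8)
The plan is to prove the equivalence by choosing, for a given $J_+$, the flat connection $\zeta$ \emph{adapted} to $J_+$ so that $J_+$ becomes anti-diagonal; this reduces the general statement to a computation parallel to Boulanger's anti-diagonal case, but carried out in non-Darboux coordinates where the deviation from Darboux is recorded by $F_\zeta$. For the forward direction, start from $J_+\in GK_\Omega^{\mathbb{T}}(\mathring{M})$. Since $\{X_j\}$ generate a free torus action we have $[X_j,X_k]=0$ and $\mathcal{L}_{X_j}J_+=0$, hence $[X_j,J_+X_k]=J_+[X_j,X_k]=0$; feeding these into the vanishing Nijenhuis tensor of $J_+$ gives $[J_+X_j,J_+X_k]=0$. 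Thus $J_+\mathcal{D}$ is an integrable distribution, transverse to $\mathcal{D}$ by tameness (if $J_+X\in\mathcal{D}$ for $X\in\mathcal{D}$ then $-\Omega(J_+X,X)=0$, contradicting the taming positivity), so it is the horizontal distribution of a flat connection $\zeta$ on $\mu:\mathring{M}\to\mathring{\Delta}$. In the resulting commuting frame $\{\partial_{\theta_j}=X_j,\partial_{\mu_j}\}$ the operator $J_+$ is anti-diagonal, $J_+\sim\left(\begin{smallmatrix} 0 & -\phi^{-1}\\ \phi & 0\end{smallmatrix}\right)$ with $\phi=\phi(\mu)$, exactly as in (\ref{J+-}).

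I would then extract the two integrability conditions. Writing the $(1,0)$-forms of $J_+$ as $\alpha^{(j)}=d\mu_j-\sqrt{-1}\sum_i\phi_{ji}d\theta_i$, the vanishing of the $(0,2)$-part of $d\alpha^{(j)}$ gives the first set of PDEs on $\phi$; integrability of $J_-=J_+^\Omega=-\Omega^{-1}J_+^*\Omega$ from (\ref{sy}), now computed with $\Omega=\sum_j d\mu_j\wedge\zeta_j+d\sigma_\zeta$, gives the second. Combining them I expect to obtain $\phi_s=\mathrm{Hess}(\tau)$ for a function $\tau$ on $\mathring{\Delta}$, that $\phi_a$ is recovered from a constant antisymmetric matrix $C$, and, crucially, that $F_\zeta$ is constant, so $\zeta$ is admissible. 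Finally the tameness of $J_+$, i.e.\ positive-definiteness of the symmetric part of $-\Omega J_+$, is a block-matrix positivity whose Schur complement is precisely $\phi_s+\tfrac14 F_\zeta(\phi_s)^{-1}F_\zeta>0$ (specializing to $\phi_s>0$, strict convexity of $\tau$, when $F_\zeta=0$). For the converse, given $(\zeta,\tau,C)$ with the positivity condition I run this backwards: $\zeta$ furnishes coordinates $\theta_j,\mu_j$ with $\Omega=\sum_j d\mu_j\wedge\zeta_j+d\sigma_\zeta$ and $F_\zeta$ constant, I prescribe $\phi$ by $\phi_s=\mathrm{Hess}(\tau)$ and $\phi_a$ from $C$ and $F_\zeta$, and set $J_+$ to be the anti-diagonal operator above. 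Then $J_+^2=-\mathrm{Id}$ is automatic, tameness is exactly the assumed positivity read off the same Schur complement, and integrability of both $J_+$ and $J_-$ holds because $\phi_s$ is a Hessian while $\phi_a,F_\zeta,C$ are constant; equivalently the affine-in-$\mu$ shift $\tilde\theta_j=\theta_j-\tfrac12\sum_k(F_\zeta)_{jk}\mu_k$ is Darboux and carries $J_+$ to Boulanger's anti-diagonal model, whose integrability is already established. By the correspondence of \cite{En} recalled before (\ref{sy}), the pair $(J_+,J_+^\Omega)$ together with $\mathbb{J}_\Omega$ then assembles into an element of $GK_\Omega^{\mathbb{T}}(\mathring{M})$.

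The main obstacle I anticipate is the integrability analysis of $J_-$ in the $J_+$-adapted frame: unlike $J_+$, the operator $J_-=-\Omega^{-1}J_+^*\Omega$ is no longer anti-diagonal once $F_\zeta\neq 0$, acquiring diagonal blocks built from $F_\zeta$ and $\phi$, so its Nijenhuis tensor entangles the derivatives of $\phi$ with those of the entries of $F_\zeta$. Disentangling this to prove \emph{simultaneously} that $F_\zeta$ must be constant and that $\phi_a$ reduces to the constant datum $C$ is the technical heart; I would manage it by passing to the Darboux coordinates $\tilde\theta_j$ above, in which $J_-$ regains Boulanger's anti-diagonal form so that the known integrability computation applies, and then transporting the conclusion back. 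Careful bookkeeping of the paper's reverse-composition matrix convention and of the sign conventions in (\ref{sy}) is the only remaining delicate point.
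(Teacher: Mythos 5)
Your frame-adaptation argument for the forward direction (commutativity of $\{X_j, J_+X_j\}$ via the Nijenhuis tensor, transversality of $J_+\mathcal{D}$ from tameness, anti-diagonality of $J_+$ in the $\zeta$-adapted frame) and your Schur-complement reading of tameness both match the paper. But the device you rely on for what you yourself call the technical heart --- the affine shift $\tilde\theta_j=\theta_j-\tfrac12\sum_k(F_\zeta)_{jk}\mu_k$ to Darboux coordinates, allegedly carrying $J_+$ (or $J_-$) to Boulanger's anti-diagonal model --- is false. Writing $d\theta=d\tilde\theta+\tfrac12F_\zeta\,d\mu$ and using $J_+^*d\theta=\phi^Td\mu$, $J_+^*d\mu=-(\phi^{-1})^Td\theta$, one gets
\[
J_+^*d\tilde\theta=\tfrac12F_\zeta(\phi^{-1})^T\,d\tilde\theta+\bigl(\phi^T+\tfrac14F_\zeta(\phi^{-1})^TF_\zeta\bigr)\,d\mu,
\]
so in the Darboux frame $J_+$ acquires diagonal blocks, and similarly $J_-$. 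In fact anti-diagonality of $J_+$ in \emph{any} Darboux system forces $F_\zeta=0$: the flat connection adapted to $J_+$ is uniquely determined by the horizontal distribution $J_+\mathcal{D}$, and the connection attached to a Darboux system has vanishing associated matrix. This is exactly the paper's remark after the theorem that anti-diagonality of the matrix form happens only when $F=0$, i.e.\ for Boulanger's subclass; the case $F_\zeta\neq0$ is genuinely outside it and cannot be reduced to it by a coordinate change. Consequently your converse-direction integrability argument collapses, and your forward-direction treatment of the constancy of $F_\zeta$ is, in addition, circular: the shift formula you invoke already presupposes that $F_\zeta$ is constant, which is precisely the thing to be proved.

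The paper closes this gap by a different, non-circular mechanism: it runs the adapted-frame construction a \emph{second} time for $J_-$, producing a second flat connection $d\theta'$ and a matrix $\psi$ with $J_-^*$ anti-diagonal in $\{d\theta',d\mu\}$, and then compares the two flat connections on the same principal $\mathbb{T}$-bundle. The computation $d\theta'=\psi^T\phi^{-1}(d\theta+Fd\mu)$, combined with $d\theta'_j=d\theta_j+df_j$ for functions $f_j$ of $\mu$ only, forces $\psi=\phi^T$ and $F_{kj}=f_{j,k}$; then $F_{kj,l}=f_{j,kl}$ is symmetric in $(k,l)$ while $F$ is antisymmetric, and the standard index juggling yields $F_{kj,l}=0$, i.e.\ constancy. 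Constancy of $\phi_a$ and $\phi_s=\mathrm{Hess}(\tau)$ then follow from closedness of the holomorphic $1$-forms $d\theta_j-\sqrt{-1}\sum_k\phi_{kj}d\mu_k$ and $d\theta'_j-\sqrt{-1}\sum_k\phi_{jk}d\mu_k$ (note your $\alpha^{(j)}=d\mu_j-\sqrt{-1}\sum_i\phi_{ji}d\theta_i$ is not an eigenform of $J_+^*$ in the paper's conventions --- a minor slip). For the converse no reduction to Boulanger is needed: integrability of $J_+$ is checked directly in the $\zeta$-frame, and integrability of $J_-$ in the \emph{fully} shifted frame $\theta'_j=\theta_j+\sum_kF_{kj}\mu_k$ --- not your half-shift, and not Darboux --- where $J_-$ really is anti-diagonal with matrix $\phi^T$.
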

\begin{proof}Let $J_+\in GK_\Omega^{\mathbb{T}}(\mathring{M})$ and $X_j$ be the fundamental vector field generated by $e_j$. Tameness of $J_+$ with $\Omega$ assures that $\{X_j, J_+X_j\}$ be a global frame of $T\mathring{M}$. Let $\{\zeta_j, \vartheta_j\}$ be the corresponding dual frame of $T^*\mathring{M}$. Since $J_+$ is integrable and the action of $\mathbb{T}$ is abelian, $\zeta:=\sum_j\zeta_je_j$ gives rise to a flat connection on $\mathring{M}$. Locally $\zeta_j=d\theta_j$, $\vartheta_j=du_j$ and $\{\theta_j+\sqrt{-1}u_j\}$ is a local $J_+$-holomorphic coordinate system on $\mathring{M}$ ($u_j$'s are actually global on $\mathring{M}$). Then just as outlined in the previous subsection $du_j=-\sum_k\phi_{kj}d\mu_k$,
where $\phi_{kj}$'s are functions only of $\mu$, and
\begin{equation}J_+^*\left(
       \begin{array}{c}
         d\theta \\
         d\mu \\
       \end{array}
     \right)=\left(
               \begin{array}{cc}
                 0 & \phi^T \\
                 -(\phi^{-1})^T & 0 \\
               \end{array}
             \right)\left(
                      \begin{array}{c}
                        d\theta \\
                        d\mu \\
                      \end{array}
                    \right).\label{J+}\end{equation}
We now know that as mentioned before for some functions $h_j$
\[\Omega=\sum_j d\mu_j\wedge d\theta_j+d(\sum_jh_jd\mu_j),\]
or equivalently with respect to $d\theta, d\mu$
\[\Omega\sim \left(
               \begin{array}{cc}
                 0 & -\textup{I} \\
                 \textup{I} & F \\
               \end{array}
             \right), \quad \textup{where}\quad F=F_\zeta=(h_{k,j}-h_{j,k}).
\]

The same argument also applies to $J_-$. There should be a local coordinate system $\{\theta_j', \mu_j\}$ and a matrix-valued function $\psi$ only of $\mu$ such that
\[J_-^*\left(
       \begin{array}{c}
         d\theta' \\
         d\mu \\
       \end{array}
     \right)=\left(
               \begin{array}{cc}
                 0 & \psi^T \\
                 -(\psi^{-1})^T & 0 \\
               \end{array}
             \right)\left(
                      \begin{array}{c}
                        d\theta' \\
                        d\mu \\
                      \end{array}
                    \right).\]
We shall prove that $\psi$ is nothing else but $\phi^T$. Actually,
\begin{eqnarray*}d\theta'&=&-\psi^TJ_-^*d\mu=\psi^T\Omega J_+\Omega^{-1}(d\mu)\\
&=&-\psi^T\Omega J_+(\partial_\theta)=\psi^T\phi^{-1}\Omega(\partial_\mu)\\
&=&\psi^T\phi^{-1}(d\theta+F d\mu).\end{eqnarray*}
However, since $d\theta$ and $d\theta'$ are both flat connections on the same principal $\mathbb{T}$-bundle, we must have
\[d\theta'_j=d\theta_j+df_j\]
for some functions $f_j$ depending only on $\mu$. This observation implies $\psi^T\phi^{-1}=\textup{I}$ or equivalently $\psi=\phi^T$ as required. Additionally, we must also have $F_{kj}=f_{j,k}$. Therefore,
\[F_{kj,l}=f_{j,kl}=f_{j,lk}=F_{lj,k},\]
and furthermore,
\[F_{kj,l}=F_{lj,k}=-F_{jl,k}=-F_{kl,j}=F_{lk,j}=F_{jk,l}=-F_{kj,l}.\]
Consequently $F_{kj,l}=0$ and $F$ is actually an anti-symmetric constant matrix, i.e. $\zeta$ is an admissible connection.

Since $d\theta_j-\sqrt{-1}\sum_k\phi_{kj}d\mu_k$ and $d\theta_j'-\sqrt{-1}\sum_k\phi_{jk}d\mu_k$ are holomorphic 1-forms w.r.t. $J_+$ and $J_-$ respectively, integrability of $J_\pm$ thus implies
\begin{equation}\phi_{kj,l}=\phi_{lj,k}, \quad \phi_{jk,l}=\phi_{jl,k}.\label{pr}\end{equation}
Then we can conclude just as Boulanger had done in \cite{Bou} that the anti-symmetric part $\phi_a$ of $\phi$ should be a constant matrix $C$ and the symmetric part $\phi_s$ of $\phi$ be the Hessian of a function $\tau$ defined on $\mathring{\Delta}$. For the reader's convenience, we sketch the argument below. From Eq.~(\ref{pr}), we find
\[\phi_{kj,l}=\phi_{lj,k}=\phi_{lk,j}=\phi_{jk,l},\]
inducing that $(\phi_a)_{kj,l}=0$ and thus that $\phi_a$ should be an anti-symmetric constant matrix. With this fact in mind, Eq.~(\ref{pr}) is simplified:
\[(\phi_s)_{kj,l}=(\phi_s)_{lj,k}=(\phi_s)_{lk,j},\]
which implies that $\phi_s$ should be the Hessian of a function $\tau$ defined on $\mathring{\Delta}$. Note that the simply-connectedness of $\mathring{\Delta}$ should be used here to obtain the global existence on $\mathring{\Delta}$ of $\tau$.

To see what tameness of $J_+$ with $\Omega$ means, we should derive the matrix form of the metric $g$. Note that w.r.t. the frame $\{d\theta, d\mu\}$,
 \[J_-^*\sim \left(
               \begin{array}{cc}
                 \textup{I} & -F \\
                 0 & \textup{I} \\
               \end{array}
             \right)\left(
                      \begin{array}{cc}
                        0 & \phi \\
                        -\phi^{-1} & 0 \\
                      \end{array}
                    \right)\left(
               \begin{array}{cc}
                 \textup{I} & F \\
                 0 & \textup{I} \\
               \end{array}
             \right)=\left(
                       \begin{array}{cc}
                         F\phi^{-1} & F\phi^{-1}F+\phi \\
                         -\phi^{-1} & -\phi^{-1}F \\
                       \end{array}
                     \right).
 \]
 Then due to the formula $g=1/2(J_+^*+J_-^*)\Omega$, we can find the matrix form of $g$ relative to $\{d\theta, d\mu\}$:
 \[g\sim \left(
           \begin{array}{cc}
             (\phi^{-1})_s & \phi^{-1}F/2 \\
             -F(\phi^{T})^{-1}/2 & \phi_s\\
           \end{array}
         \right).
 \]
 It's elementary to find that positive-definiteness of $g$ is equivalent to that both $\phi_s$ and $\phi_s+1/4F(\phi_s)^{-1}F$ are positive-definite. This shows $\tau$ should satisfy the properties listed in the theorem. Clearly, the triple $(\zeta,\tau, C)$ determines $J_+$ completely.

 Conversely, given the triple $(\zeta,\tau, C)$ satisfying the conditions listed in the theorem, let $\phi_s$ be the Hessian of $\tau$ and $\phi=\phi_s+C$ and define $J_+$ in the manner of Eq.~(\ref{J+}). Obviously such a $J_+\in GK_\Omega^{\mathbb{T}}(\mathring{M})$.
\end{proof}
\emph{Remark}. Several comments are disirable here. i) In our more general case, the matrix forms of $J_+^*$ and $J_-^*$ are not necessarily anti-diagonal in the fixed frame $\{d\theta, d\mu\}$. This happens only if $F=0$, which is precisely what Boulanger called an anti-diagonal toric GK structure of symplectic type. ii) $\{\theta_j, \mu_j\}$ has to be a Darboux coordinate system in Boulanger's case while in general this is not the case; iii) Our theorem reveals that the broader case of general toric GK structures of symplectic type is also, to some extent, very well-behaved. We will analyze such structures in a forthcoming paper.

The following theorem is our first basic observation in this paper, motivated by the general theory developed by the author in \cite{Wang}.
\begin{theorem}\label{cha}
Let $\mathbb{J}_1\in GK_\Omega^\mathbb{T}(M)$. Then $\mathbb{J}_1\in DGK_\Omega^\mathbb{T}(M)$ if and only if the extended $\mathbb{T}$-action on $M$ is strong Hamiltonian.
\end{theorem}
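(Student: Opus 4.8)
The plan is to verify both conditions on the dense open set $\mathring{M}=\mu^{-1}(\mathring\Delta)$, where Theorem~\ref{GKG} applies, and to show that each of them is separately equivalent to the vanishing of the associated matrix $F=F_\zeta$ of the $J_+$-horizontal connection. Since $\beta_1(d\mu_\varsigma,d\mu_\zeta)$ is a globally smooth function on $M$ and $\mathring{M}$ is dense, the strong Hamiltonian condition holds on $M$ if and only if it holds on $\mathring{M}$; likewise the anti-diagonal condition $J_+\mathcal{D}=J_-\mathcal{D}$ is a condition on $\mathring{M}$, where $\mathcal{D}=\mathrm{span}\{X_j\}$ has full rank $n$. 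So I would fix admissible data $(\zeta,\tau,C)$ as in Theorem~\ref{GKG}, write $\phi=\phi_s+C$ with $\phi_s$ the Hessian of $\tau$, and use the frame $\{\partial_{\theta_j},\partial_{\mu_j}\}$ together with the matrix forms of $J_\pm$ and $g$ computed there.

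First I would record the two reformulations. On the anti-diagonal side, reading off the matrix forms gives $J_+\mathcal{D}=\mathrm{span}\{\partial_{\mu_i}\}$, whereas $J_-\partial_{\theta_j}$ carries a tangential ($\partial_\theta$) component governed by the block $F\phi^{-1}$; hence $J_-\mathcal{D}=J_+\mathcal{D}$ if and only if $F=0$ (this is exactly the content of the Remark following Theorem~\ref{GKG}). On the strong Hamiltonian side, note that $\{d\mu_j\}$ spans the annihilator $\mathcal{D}^{\circ}$, so $\beta_1(d\mu_\varsigma,d\mu_\zeta)=0$ for all $\varsigma,\zeta$ is equivalent to the vanishing of the $\partial_\mu\partial_\mu$-block of $\beta_1^\sharp=-\frac{1}{2}(J_+-J_-)g^{-1}$. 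Equivalently, using Eq.~(\ref{cent}) one has $\mathbb{J}_1(X_\varsigma+\xi_\varsigma)=g^{-1}d\mu_\varsigma\in TM$, so the friendly form of the condition reads $\xi_\zeta(g^{-1}d\mu_\varsigma)=0$; I would use whichever is more convenient.

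The heart of the argument is the explicit evaluation of this $\partial_\mu\partial_\mu$-block. Writing $g$ in the general ($F\neq0$) case as the block matrix of Theorem~\ref{GKG}, I would invert it via the Schur complement $S=\phi_s+\frac{1}{4}F(\phi^{-1})^{T}\big((\phi^{-1})_s\big)^{-1}\phi^{-1}F$ and combine this with the $\partial_\mu$-parts of $(J_+-J_-)\partial_{\theta_l}$ and $(J_+-J_-)\partial_{\mu_l}$. After simplification (using $\phi^{-1}=(\phi^{-1})_s+(\phi^{-1})_a$ and $2(\phi^{-1})_s-\phi^{-1}=(\phi^{-1})^{T}$) the block collapses to
\[
-\frac{1}{2}\,\phi^{-1}\big((\phi^{-1})_s\big)^{-1}(\phi^{-1})^{T}\,F\,S^{-1},
\]
a product in which $F$ is sandwiched between invertible matrices. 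Hence it vanishes entrywise, i.e. for all $\varsigma,\zeta$, if and only if $F=0$. Combining the two reformulations yields anti-diagonal $\iff F=0\iff$ strong Hamiltonian, as required.

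I expect the main obstacle to be precisely this last computation. The forward implication (anti-diagonal $\Rightarrow$ strong Hamiltonian) is painless: when $F=0$ the metric, and therefore $\beta_1$, is block-anti-diagonal, so its $\partial_\mu\partial_\mu$-block is manifestly zero, as is already visible in the matrix forms collected in \S\ref{AGB}. The genuine work lies in the converse, where the off-diagonal blocks of $g$ survive and one must carry out the block inversion and check that the mixed terms cancel to leave only the sandwiched $F$; the elementary matrix identities needed for this cancellation are exactly those gathered in the appendix.
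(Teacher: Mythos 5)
Your proof is correct, but it takes a genuinely different route from the paper's. The paper's argument is frame-free and never invokes Theorem~\ref{GKG}: using Eq.~(\ref{sy}) it rewrites $\beta_1=(J_+-J_-)(J_++J_-)^{-1}\Omega^{-1}$, notes $\Omega^{-1}d\mu_j=-X_j$ so that the strong Hamiltonian condition becomes the invariance statement $(J_+-J_-)(J_++J_-)^{-1}\mathcal{D}\subset\mathcal{D}$, and then runs a short chain of pointwise operator equivalences, driven by the anticommutation $(J_++J_-)(J_+-J_-)=-(J_+-J_-)(J_++J_-)$, terminating at $J_+\mathcal{D}=J_-\mathcal{D}$; no coordinates, no classification data, no integrability input beyond Eq.~(\ref{sy}). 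You instead funnel both conditions through the data $(\zeta,\tau,C)$ of Theorem~\ref{GKG} and exhibit a common obstruction, the associated matrix $F=F_\zeta$: anti-diagonality fails exactly on the $F\phi^{-1}$ block of $J_-$ (as the Remark notes), and on the other side your block inversion of $g$ is sound --- the Schur complement you write equals $\phi_s+\frac{1}{4}F(\phi_s)^{-1}F$ by Fact III of the appendix (since $(\phi^{-1})^T\bigl((\phi^{-1})_s\bigr)^{-1}\phi^{-1}=(\phi_s)^{-1}$), and I have checked that the $\partial_\mu\partial_\mu$-block of $\beta_1$ does collapse, signs included, to $-\frac{1}{2}(\phi_s)^{-1}F S^{-1}$, which vanishes iff $F=0$ because $F$ sits between invertible factors. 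Your preliminary reductions are also fine: $\beta_1(d\mu_\varsigma,d\mu_\zeta)$ is a global smooth function, so density of $\mathring{M}$ suffices (the paper makes the same reduction), and the reformulation $\mathbb{J}_1(X_\varsigma+\xi_\varsigma)=g^{-1}d\mu_\varsigma$ agrees with Eq.~(\ref{cent}). The trade-off: the paper's operator chain is shorter, works pointwise, and is independent of the classification theorem, whereas your computation needs the full strength of Theorem~\ref{GKG} and the appendix identities but yields strictly more information --- it identifies the single invariant $F_\zeta$ (failure of the admissible coordinates to be Darboux) as the simultaneous obstruction to both properties and quantifies the failure of strong Hamiltonicity explicitly, something the paper's algebra leaves implicit.
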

\begin{proof}Note that the notion of strong Hamiltonian action is independent of the splitting we choose. We are safe here if we only use the notion of Hamiltonian action in the ordinary symplectic sense. Obviously we only need to prove the theorem on $\mathring{M}$. Let $\{e_j\}$ be a basis of $\mathfrak{t}$, $\{X_j\}$ its associated fundamental vector fields and $\{\mu_j\}$ the corresponding components of $\mu$. First note that due to Eq.~(\ref{sy}),
\[\beta_1=-\frac{1}{2}(J_+-J_-)g^{-1}=(J_+-J_-)(J_++J_-)^{-1}\Omega^{-1}.\]
Therefore,
\[\beta_1(d\mu_j)=(J_+-J_-)(J_++J_-)^{-1}\Omega^{-1}(d\mu_j)=-(J_+-J_-)(J_++J_-)^{-1}X_j,\]
and we can claim that the extended $\mathbb{T}$-action is strong Hamiltonian if and only if
\[(J_+-J_-)(J_++J_-)^{-1}\mathcal{D}\subset \mathcal{D}\]
where $\mathcal{D}$ is the Lagrangian distribution generated by $\{X_j\}$. Note that
\[(J_++J_-)(J_+-J_-)=-(J_+-J_-)(J_++J_-)\]
and thus
\begin{eqnarray*}
&\quad&(J_+-J_-)(J_++J_-)^{-1}\mathcal{D}\subset \mathcal{D}\Leftrightarrow (J_++J_-)^{-1}(J_+-J_-)\mathcal{D}\subset \mathcal{D}\\
&\Leftrightarrow &[\textup{Id}-2(J_++J_-)^{-1}J_-]\mathcal{D}\subset \mathcal{D}
\Leftrightarrow (J_++J_-)^{-1}J_-\mathcal{D}\subset \mathcal{D}\\
&\Leftrightarrow& (J_++J_-)^{-1}J_-\mathcal{D}=\mathcal{D}
\Leftrightarrow J_-(J_++J_-)\mathcal{D}=\mathcal{D}\\
&\Leftrightarrow& (J_-J_+-\textup{Id})\mathcal{D}=\mathcal{D}
\Leftrightarrow J_-J_+\mathcal{D}=\mathcal{D}\Leftrightarrow J_+\mathcal{D}=J_-\mathcal{D}.
\end{eqnarray*}
The proof is thus completed.
\end{proof}

From now on, we will consider $J_+\in DGK_\Omega^\mathbb{T}(M)$ only unless otherwise stated. To motivate our further steps, some general discussion concerning the implication of Thm.~\ref{cha} is desirable here. Due to the observation in \cite{Wang}, in the metric splitting, in terms of the admissible coordinates $\{\theta_j, \mu_j\}$ associated with $J_+$, the infinitesimal extended action of $\mathfrak{t}$ on $\mathring{M}$ is given by
\begin{equation}X_j+\xi_j:=\partial_{\theta_j}-b(\partial_{\theta_j})=\partial_{\theta_i}-\sum_k(\phi^{-1})_a^{kj}d\theta_k,\quad j=1,2,\cdots, n.\label{ga}\end{equation}
 Now that the extended $\mathbb{T}$-action on $M$ is strong Hamiltonian, it can be generalized complexified. Thus we have a GC action of $\mathbb{T}_\mathbb{C}$ on $M$. Especially in the present setting,  $\mathcal{Z}:=\mathbb{T}_\mathbb{C}\mu^{-1}(0)=\mathring{M}$ (we can make a translation if necessary to assure that $0$ be a regular value of $\mu$) is itself a $\mathbb{T}_\mathbb{C}$-orbit. Therefore, from the general theory of \cite{Wang}, $\mathring{M}$ has to be a $\mathbb{T}$-invariant K$\ddot{a}$hler manifold whose complex structure $J_0$ is actually induced from $\mathbb{T}_\mathbb{C}$. Furthermore, as $\mathbb{T}_\mathbb{C}$ acts on $M$ in a global fashion, it is hopeful that this $J_0$ would be actually a global complex structure on $M$.

On the other side, there does exist a compatible K$\ddot{a}$hler structure on $\mathring{M}$ from Boulanger's theory: Since the symplectic potential $\tau$ underlying Boulanger's result is a strictly convex function on $\mathring{\Delta}$, due to Abreu-Guillemin theory, it should define a K$\ddot{a}$hler structure at least on $\mathring{M}$, whose complex structure $J_0'$ has the matrix form below
\begin{equation}J_0'\sim \left(
            \begin{array}{cc}
              0 & -(\phi_s)^{-1} \\
              \phi_s & 0 \\
            \end{array}
          \right),\label{avc}
\end{equation}
where $\phi_s$ is the Hessian of $\tau$. We shall reasonably call $J_0'$ the average of $J_+$ and $J_-$ in the sense that $\phi_s=(\phi+\phi^T)/2$.

The following theorem shows the two complex structures $J_0$ and $J_0'$ on $\mathring{M}$ are actually the same. This provides an intrinsic explanation of the origin of Boulanger's symplectic potential $\tau$--it is simply the symplectic potential of a genuine toric K$\ddot{a}$hler structure underlying the toric GK structure, as described in Abreu-Guillemin theory.
\begin{theorem}On $\mathring{M}$, the two complex structures $J_0$ and $J_0'$ described above coincide. In particular, in the present setting, the K$\ddot{a}$hler form $\omega$ associated to Eq.~(\ref{kah}) coincides with $\Omega$. \label{coi}
\end{theorem}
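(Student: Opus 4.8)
The plan is to pin down both complex structures by their action on a single natural frame of $T\mathring{M}$ and then reduce the two assertions to one matrix identity. Recall that on the single $\mathbb{T}_\mathbb{C}$-orbit $\mathring{M}=\mathbb{T}_\mathbb{C}\mu^{-1}(0)$ the structure $J_0$ from \cite{Wang} is defined by $J_0X_j=Y_j$, where $Y_j=J_+X_j^+$, $X_j^+=X_j+g^{-1}\xi_j$, with $X_j=\partial_{\theta_j}$ and $\xi_j=-\sum_k(\phi^{-1})_a^{kj}d\theta_k$ read off from Eq.~(\ref{ga}). Since the $\mathbb{T}$-action is infinitesimally free on $\mathring{M}$, the $2n$ fields $\{X_j,Y_j\}$ form a global frame, and as $J_0^2=-\textup{Id}$ we get $J_0Y_j=-X_j$ for free; thus $J_0$ is determined once the $Y_j$ are expressed through $\{\partial_\theta,\partial_\mu\}$.

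The key simplification is Eq.~(\ref{cent}): it gives $Y_j=J_+X_j^+=-g^{-1}d\mu_j$ directly, bypassing the separate evaluation of $X_j^+$ and $J_+$. Feeding in the matrix form of $g$ from the list in \S~\ref{AGB} (top-left block $(\phi^{-1})_s$, bottom-right block $\phi_s$) yields $Y_j=-\sum_i(\phi_s^{-1})_{ij}\partial_{\mu_i}$. On the other hand, applying $J_0'$ to $X_j=\partial_{\theta_j}$ through its matrix form Eq.~(\ref{avc}) produces exactly $J_0'\partial_{\theta_j}=-\sum_i(\phi_s^{-1})_{ij}\partial_{\mu_i}$. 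Hence $J_0X_j=Y_j=J_0'X_j$ for all $j$; applying each complex structure once more gives $J_0Y_j=-X_j=J_0'Y_j$, and since $\{X_j,Y_j\}$ is a frame we conclude $J_0=J_0'$ on $\mathring{M}$. Note this step is essentially immediate; the real content is the clean form of $Y_j$ supplied by Eq.~(\ref{cent}).

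For the second assertion, let $\omega$ be the Kähler form of the structure $(\tilde{g},J_0)$ from Eq.~(\ref{kah}), say $\omega(\cdot,\cdot)=\tilde{g}(J_0\cdot,\cdot)$. A direct check gives $\omega(X_j,X_k)=\tilde{g}(Y_j,X_k)=g(X_k,Y_j)=-d\mu_j(X_k)=0$ and $\Omega(X_j,X_k)=0$; by $J_0$-compatibility of both forms the $(Y,Y)$-components then also vanish, so it suffices to compare $\omega$ and $\Omega$ on the mixed pairs $(X_j,Y_k)$. Evaluating $\Omega=\sum_l d\mu_l\wedge d\theta_l$ gives $\Omega(X_j,Y_k)=(\phi_s^{-1})_{jk}$, while Eq.~(\ref{kah}) gives $\omega(X_j,Y_k)=\tilde{g}(Y_j,Y_k)=g(X_j^+,X_k^+)$. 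Writing $Q=\textup{I}-(\phi^{-1})_s^{-1}(\phi^{-1})_a$ so that $X_j^+=\sum_iQ_{ij}\partial_{\theta_i}$, the quantity $g(X_j^+,X_k^+)$ equals $[Q^T(\phi^{-1})_sQ]_{jk}$, and the whole statement collapses to the matrix identity $Q^T(\phi^{-1})_sQ=\phi_s^{-1}$.

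Verifying this last identity is the one genuine computation, and I expect it to be the main (though routine) obstacle, to be dispatched with the elementary facts on matrices collected in the appendix. Concretely, it follows from the relation $(\phi^{-1})_s=(\phi^T)^{-1}\phi_s\,\phi^{-1}$ (equivalently $(\phi^{-1})_s\,Q=(\phi^T)^{-1}$, since $(\phi^{-1})_sQ=(\phi^{-1})_s-(\phi^{-1})_a=(\phi^T)^{-1}$): substituting $Q=(\phi^{-1})_s^{-1}(\phi^T)^{-1}$ collapses $Q^T(\phi^{-1})_sQ$ to $\phi^{-1}(\phi^{-1})_s^{-1}(\phi^T)^{-1}=\phi_s^{-1}$. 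This gives $\omega(X_j,Y_k)=\Omega(X_j,Y_k)$, and therefore $\omega=\Omega$ on $\mathring{M}$, completing the proof.
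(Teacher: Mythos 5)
Your proposal is correct, and it reaches the theorem by a route that differs from the paper's in both halves. For $J_0=J_0'$, the paper expands $J_0\partial_\theta=-\mathbb{J}_1(\partial_\theta-b(\partial_\theta))$ through the Gualtieri map and the identities (\ref{sy}), arriving at $-\bigl(\tfrac{J_++J_-}{2}\bigr)^{-1}\partial_\theta=-(\phi_s)^{-1}\partial_\mu$ after a chain of operator manipulations; you bypass this entirely by invoking the moment-map identity (\ref{cent}), which hands you $Y_j=-g^{-1}d\mu_j$ at once, so only the block form of $g$ is needed. The two computations are equivalent in content, but yours is a genuine shortcut. For $\omega=\Omega$, the divergence is more substantial: the paper stays at the level of the Courant algebroid, using that $\tilde{g}$ and $J_0$ are the restrictions of $-\mathbb{J}_1\mathbb{J}_2$ and $-\mathbb{J}_1$ to $\mathbb{K}\oplus\mathbb{J}_1\mathbb{K}$, so that $\omega(X_j,\cdot)=(\mathbb{J}_1\mathbb{J}_2\mathbb{J}_1(X_j+\xi_j),\cdot)=-(\mathbb{J}_2(X_j+\xi_j),\cdot)=\Omega(X_j,\cdot)$ with no coordinate work at all --- the coincidence is forced purely by the algebra of the commuting pair ($\mathbb{J}_1\mathbb{J}_2\mathbb{J}_1=-\mathbb{J}_2$), an argument that is splitting-independent and not tied to the toric setting. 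You instead compute in the admissible frame and reduce everything to the matrix identity $Q^T(\phi^{-1})_sQ=(\phi_s)^{-1}$ with $Q=\textup{I}-[(\phi^{-1})_s]^{-1}(\phi^{-1})_a$, which you correctly settle via $(\phi^{-1})_sQ=(\phi^T)^{-1}$ and Fact III of the appendix (equivalently, inverting $\phi^T(\phi^{-1})_s\phi=\phi_s$); I checked the index conventions and the identity is robust even against the transpose ambiguities in the paper's matrix conventions, since either sign of the antisymmetric term collapses to $(\phi_s)^{-1}$ by Fact III. What each approach buys: the paper's argument explains \emph{why} $\omega=\Omega$ (it is automatic from the generalized-metric structure, hence portable to non-toric strong Hamiltonian actions), while yours is elementary and self-contained and makes explicit exactly which matrix identity carries the content --- at the price of being wedded to the admissible coordinates. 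Two small points worth noting, neither a gap: your appeal to $J_0$-compatibility of $\omega$ to kill the $(Y,Y)$-components is legitimate since the Kähler property of $(\tilde{g},J_0,\omega)$ is part of the cited result from \cite{Wang} (and in any case $\omega(Y_j,Y_k)=-\tilde{g}(X_j,Y_k)=-g(X_j,Y_k)=d\mu_k(X_j)=0$ directly), and your use of $\mathring{M}=\mathbb{T}_\mathbb{C}\mu^{-1}(0)$ tacitly assumes, as the paper notes, that $0$ is arranged to be a regular value of $\mu$ by translation.
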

\begin{proof}
In terms of admissible coordinates associated to $J_+\in DGK_\Omega^{\mathbb{T}}(M)$, $J_0'$, of course, should be of the form in Eq.~(\ref{avc}). So to prove the result, we only need to check that $J_0\partial_\mu=\phi_s\partial_\theta$. By definition,
\begin{eqnarray*}J_0\partial_{\theta}&=&-\mathbb{J}_1(\partial_{\theta}-b(\partial_{\theta}))=J_+(\partial_{\theta}-g^{-1}b(\partial_{\theta}))\\
&=&J_+[\textup{Id}-(J_++J_-)^{-1}(J_+-J_-)]\partial_\theta\\
&=&2J_+(J_++J_-)^{-1}J_-\partial_\theta\\
&=&2(J_-^{-1}+J_+^{-1})^{-1}\partial_\theta\\
&=&-(\frac{J_++J_-}{2})^{-1}\partial_\theta\\
&=&-(\phi_s)^{-1}\partial_\mu
\end{eqnarray*}
where Eq.~(\ref{sy}) is used. Thus, $J_0\partial_{\mu}=\phi_s \partial_\theta$ as required.

To see the two symplectic structures coincide, we recall from \cite{Wang} another interpretation of the K$\ddot{a}$hler structure $\omega$: One can identify $T\mathring{M}$ with $\mathbb{K}\oplus \mathbb{J}_1\mathbb{K}$ (see \S~2 and \S~3) via
\[X_j\mapsto X_j+\xi_j,\quad Y_j\mapsto Y_j,\]
where $Y_j=-\mathbb{J}_1(X_j+\xi_j)$. Then $J_0$ is nothing else but the restriction of $-\mathbb{J}_1$ on $\mathbb{K}\oplus \mathbb{J}_1\mathbb{K}$ and $\tilde{g}$ is the restriction of the generalized metric $-\mathbb{J}_1\mathbb{J}_2$ on $\mathbb{K}\oplus \mathbb{J}_1\mathbb{K}$. Consequently,
\begin{eqnarray*}\omega(X_j,X_k)&=&\omega(Y_j,Y_k)=\tilde{g}(J_0 X_j, X_k)
=(\mathbb{J}_1\mathbb{J}_2\mathbb{J}_1(X_j+\xi_j), X_k+\xi_k)\\
&=&-(\mathbb{J}_2(X_j+\xi_j), X_k+\xi_k)=\Omega(X_j,X_k)=0,
\end{eqnarray*}
where Eq.~(\ref{ga}) and the matrix form of $\mathbb{J}_2$ from \S~2 are used. Similarly,
\[\omega(X_j,Y_k)=(\mathbb{J}_1\mathbb{J}_2\mathbb{J}_1(X_j+\xi_j), Y_k)=-(\mathbb{J}_2(X_j+\xi_j), Y_k)=\Omega(X_j,Y_k).\]
The proof is then completed as expected.
\end{proof}
$J_0$ is essential to interpret properly the geometry of the underlying toric GK structure $\mathbb{J}_1$. In this respect, we have
\begin{theorem} \label{hol}On $\mathring{M}$, $\mathbb{J}_1$ is actually a B-transform of a GC structure $\mathbb{J}_\beta$ induced from a $J_0$-holomorphic Poisson structure $\beta:=-\frac{1}{4}(J_0\beta_1+\sqrt{-1}\beta_1)$.
\end{theorem}
\begin{proof}We divide the proof into three steps. Note that throughout the proof we will use the admissible coordinates associated with $J_+$.

\emph{Step 1}. Prove $\beta_1J_0^*=J_0\beta_1$ or equivalently $\beta_1$ is of type $(2,0)+(0,2)$ w.r.t. $J_0$.

Note that matrix forms of $\beta_1$ and $J_0$ are
\[\beta_1\sim\left(
                              \begin{array}{cc}
                                0 & -\phi_a(\phi_s)^{-1} \\
                                -(\phi_s)^{-1}\phi_a & 0 \\
                              \end{array}
                            \right),\quad J_0\sim \left(
            \begin{array}{cc}
              0 & -(\phi_s)^{-1} \\
              \phi_s & 0 \\
            \end{array}
          \right).\]
The claim then can be directly checked.

\emph{Step 2}. Prove $J_0\beta_1+\sqrt{-1}\beta_1$ is holomorphic w.r.t. $J_0$.

Let $Y_j=J_0\partial_{\theta_j}$. Firstly, one should note that $\{\partial_{\theta_j}-\sqrt{-1}Y_j\}$ is a $J_0$-holomorphic frame of $T\mathring{M}$. Let $du:=-\phi_sd\mu=-J_0^*d\theta$. Then $\{\theta_j, u_j\}$ is also a local coordinate system, and $Y_j=\partial_{u_j}$. Furthermore, $\{d\theta_j+\sqrt{-1}du_j\}$ is a $J_0$-holomorphic frame of $T^*\mathring{M}$  and
\[d\theta+\sqrt{-1}du=\left(
                \begin{array}{cc}
                  \textup{I} & -\sqrt{-1}\phi_s \\
                \end{array}
              \right)\left(
                       \begin{array}{c}
                         d\theta \\
                         d\mu \\
                       \end{array}
                     \right)
              .
\]
Secondly, the matrix form of $-4\beta$ is
\[J_0\beta_1+\sqrt{-1}\beta_1\sim \left(
                            \begin{array}{cc}
                              -\phi_a & -\sqrt{-1}\phi_a(\phi_s)^{-1} \\
                              -\sqrt{-1}(\phi_s)^{-1}\phi_a &(\phi_s)^{-1}\phi_a(\phi_s)^{-1}  \\
                            \end{array}
                          \right).
\]
Thus
\begin{eqnarray*}
-4\beta(d\theta+\sqrt{-1}du)&=&\left(
                \begin{array}{cc}
                  \textup{I} & -\sqrt{-1}\phi_s \\
                \end{array}
              \right)(J_0\beta_1+\sqrt{-1}\beta_1)\left(
                       \begin{array}{c}
                         d\theta \\
                         d\mu \\
                       \end{array}
                     \right)\\
                     &=&\left(
                \begin{array}{cc}
                  \textup{I} & -\sqrt{-1}\phi_s \\
                \end{array}
              \right)\left(
                            \begin{array}{cc}
                              -\phi_a & -\sqrt{-1}\phi_a(\phi_s)^{-1} \\
                              -\sqrt{-1}(\phi_s)^{-1}\phi_a &(\phi_s)^{-1}\phi_a(\phi_s)^{-1}  \\
                            \end{array}
                          \right)\left(
                                   \begin{array}{c}
                                     \partial_\theta \\
                                     \partial_\mu \\
                                   \end{array}
                                 \right)\\
 &=&\left(\begin{array}{cc}-2\phi_a & -2\sqrt{-1}\phi_a(\phi_s)^{-1} \\
\end{array} \right)\left(
                     \begin{array}{c}
                       \partial_\theta \\
                       \partial_\mu \\
                     \end{array}
                   \right)\\
                   &=&-2\phi_a(\partial_\theta+\sqrt{-1}(\phi_s)^{-1}\partial_\mu)\\
                   &=&-2\phi_a(\partial_\theta-\sqrt{-1}\partial_u).
\end{eqnarray*}
Since $\phi_a$ is a constant matrix, we have proved that $\beta$ is $J_0$-holomorphic. The fact that $\beta_1$ is a Poisson structure, together with the above two steps, implies that $\beta$ is a holomorphic Poisson structure relative to $J_0$ (or one can see this directly from the above local computation).

\emph{Step 3}. Prove that $\mathbb{J}_1$ is a B-transform of the GC structure $\mathbb{J}_\beta$.

We should find a 2-form $b_1$ such that
\[\frac{1}{2}\left(
  \begin{array}{cc}
    -J_+-J_-& \omega_+^{-1}-\omega_-^{-1} \\
    -\omega_++\omega_- & J_+^*+J_-^* \\
  \end{array}
\right)=\left(
          \begin{array}{cc}
            1 & 0 \\
            -b_1 & 1 \\
          \end{array}
        \right)\left(
                 \begin{array}{cc}
                   -J_0 & \beta_1 \\
                   0 & J_0^* \\
                 \end{array}
               \right)\left(
          \begin{array}{cc}
            1 & 0 \\
            b_1 & 1 \\
          \end{array}
        \right),
\]
which, in components, is equivalent to
\begin{equation}-\frac{1}{2}(J_++J_-)=-J_0+\beta_1b_1,\label{b1}\end{equation}
and
\begin{equation}-\frac{1}{2}(\omega_+-\omega_-)=b_1J_0-b_1\beta_1b_1+J_0^*b_1.\label{b2}\end{equation}
In terms of admissible coordinates $\{\theta_j, \mu_j\}$, we can choose
\[b_1\sim \left(
            \begin{array}{cc}
              (\phi^{-1})_a & 0 \\
              0 & 0 \\
            \end{array}
          \right).
\]
It is an elementary computation to see this $b_1$ does fulfill these equations (\ref{b1}) and (\ref{b2}).
\end{proof}
\emph{Remark}. Let $\phi_a=(C_{kj})$. From the above proof, in terms of the $J_0$-holomorphic frame $\partial_\theta-\sqrt{-1}\partial_u$, the holomorphic Poisson structure is
\begin{equation}\beta=\frac{1}{8}\sum_{k,j}C_{kj}(\partial_{\theta_j}-\sqrt{-1}\partial_{u_j})\wedge (\partial_{\theta_k}-\sqrt{-1}\partial_{u_k}).\label{Po}\end{equation}
Note that $\phi_a$ should be interpreted as the matrix form of a skew-symmetric bilinear function $c$ on $\mathfrak{t}^*$ relative to the basis $\{e_j\}$, i.e. \begin{equation}c=\frac{1}{2}\sum_{j,k}C_{kj}e_j\wedge e_k\end{equation} and that $\beta$ can be obtained from this expression by simply replacing $e_j$'s with the $J_0$-holomorphic part of $X_j$'s.\footnote{If we apply this operation to $c$ w.r.t. $J_\pm$, then we get the $J_\pm$-holomorphic Poisson structures underlying a GK structure which was first noted by N. Hitchin \cite{Hi}.} This fact will be used in \S~\ref{sub} and \S~\ref{GDC}. Another fact one should notice is that the 2-form $b_1$ appeared in the proof is actually a \emph{global} 2-form on $M$. Precisely, note that the 2-form $1/2\sum_{k,j}C_{kj}d\mu_j\wedge d\mu_k$ is globally defined on $M$ as $\mu_j$'s are. Therefore $b_1=b-1/2\sum_{k,j}C_{kj}d\mu_j\wedge d\mu_k$ is as well globally defined on $M$.

To conclude this subsection, let us have a look at the \emph{type} of $\mathbb{J}_1$ on $\mathring{M}$. At a point $x\in\mathring{M}$, this is the complex dimension transverse to the symplectic leaf of $\beta_1$ through $x$. $x$ is called regular if this number is constant around $x$. From Eq.~(\ref{Po}), we see that points in $\mathring{M}$ are all regular (actually $\mathring{M}$ is a $\mathbb{T}_\mathbb{C}$-orbit and the GC $\mathbb{T}_\mathbb{C}$-action preserves $\mathbb{J}_1$) and the common type is $n-\textup{rk}(\phi_a)$.
\subsection{Compactification}\label{Com}
Now we can start to tackle the subtle problem of compactification. It is the problem to determine whether a given $J_+\in DGK_{\Omega}^{\mathbb{T}}(\mathring{M})$ is actually the restriction of an element in $DGK_{\Omega}^{\mathbb{T}}(M)$ on $\mathring{M}$. The following Lemma~\ref{c1} and Thm.~\ref{com} can be viewed as a refined and strengthened version of the corresponding argument in \cite{Bou}.
\begin{lemma}\label{c1}$J_+\in GK_{\Omega}^{\mathbb{T}}(\mathring{M})$ is the restriction of an element in $GK_{\Omega}^{\mathbb{T}}(M)$ on $\mathring{M}$ if and only if all the canonically associated tensors $g, b$ and $(J_++J_-)^{-1}$ with $J_+$ in the metric splitting can be extended smoothly to $M$.
\end{lemma}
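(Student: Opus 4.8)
The plan is to dispose of the easy direction by inspection and, in the hard direction, to reconstruct the entire structure algebraically from $\Omega$, $g$, $b$ and then upgrade the pointwise GK identities from $\mathring{M}$ to $M$ by density. For necessity, if $J_+$ is the restriction of a global element of $GK_\Omega^\mathbb{T}(M)$, then by Eq.~(\ref{sy}) the tensors $g=-\frac12\Omega(J_++J_-)$ and $b=-\frac12\Omega(J_+-J_-)$ are restrictions of globally smooth tensors; moreover the generalized metric condition forces $g$ to be positive-definite, hence invertible, on all of $M$, so that $(J_++J_-)^{-1}=-\frac12 g^{-1}\Omega$ is smooth on $M$ as well. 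Thus all three tensors extend smoothly, which is the trivial implication.

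For sufficiency, suppose $g$, $b$ and $(J_++J_-)^{-1}$ all extend smoothly to $M$. First I would recover $J_\pm$ algebraically: solving Eq.~(\ref{sy}) gives $J_\pm=-\Omega^{-1}(g\pm b)$, so since $\Omega$ is a global symplectic form and $g,b$ are smooth on $M$, these formulas define smooth endomorphism fields $J_\pm$ of $TM$ restricting to the given ones on $\mathring{M}$. Next I would transport every \emph{pointwise} GK identity from the dense open set $\mathring{M}$ to all of $M$ by continuity: the relations $J_\pm^2=-\mathrm{Id}$, $J_-=-\Omega^{-1}J_+^*\Omega$, and the compatibility $g(J_\pm\cdot,J_\pm\cdot)=g(\cdot,\cdot)$ are equalities of smooth tensors on $\mathring{M}$, hence hold on $\overline{\mathring{M}}=M$.

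The hard part will be metric positivity at the boundary $M\setminus\mathring{M}$. Being a smooth extension of a positive-definite form, $g$ is at least positive-\emph{semi}definite on $M$; what could fail is non-degeneracy along the boundary. This is exactly where the hypothesis on $(J_++J_-)^{-1}$ enters: since $J_++J_-=-2\Omega^{-1}g$, the smooth extendability of $(J_++J_-)^{-1}=-\frac12 g^{-1}\Omega$ forces $g$ to be invertible at every point of $M$, and a non-degenerate positive-semidefinite symmetric form is positive-definite. Thus $g$ is a genuine $\mathbb{T}$-invariant Riemannian metric on $M$, equivalently $J_+$ is tamed by $\Omega$ everywhere. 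I expect this non-degeneracy step to be the crux, and it is precisely the point where the non-degeneracy condition replaces Boulanger's positive-definiteness requirement.

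It then remains to handle integrability and assemble the structure. The Nijenhuis tensors of $J_\pm$ are smooth tensors on $M$ vanishing on the dense set $\mathring{M}$, hence vanish identically, so $J_\pm$ are integrable complex structures on $M$. With $J_+$ tamed by $\Omega$ and $J_+^\Omega=J_-$ integrable, the characterization of GK structures of symplectic type recalled from \cite{En} yields an element of $GK_\Omega^\mathbb{T}(M)$ whose underlying complex structure restricts to the given $J_+$ on $\mathring{M}$; $\mathbb{T}$-invariance passes to $M$ by continuity, completing the proof.
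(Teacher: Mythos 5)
Your proposal is correct and follows essentially the same route as the paper's proof: necessity is immediate from Eq.~(\ref{sy}), and for sufficiency you recover $J_\pm=-\Omega^{-1}(g\pm b)$ from the smooth extensions of $g$ and $b$, extend integrability and nonnegative-definiteness of $g$ by continuity, and use the smooth extension of $(J_++J_-)^{-1}$ to force non-degeneracy, hence positive-definiteness, of $g$ on $M\setminus\mathring{M}$. The only difference is that you spell out details (vanishing of the Nijenhuis tensors on a dense set, the tameness characterization from \cite{En}) that the paper compresses into a brief continuity argument.
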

\begin{proof}It suffices to prove the sufficiency part of the lemma.

Due to the formulae $g=-\frac{1}{2}\Omega(J_++J_-)$ and $b=-\frac{1}{2}\Omega(J_+-J_-)$, if both $g$, $b$ can be extended smoothly to $M$, then both the tensors $J_++J_-$ and $J_+-J_-$ can as well be extended smoothly. This implies that $J_\pm$ are well-defined smooth tensors on $M$. A continuity argument makes it clear that $J_\pm$ are actually integrable complex structures on $M$.

By continuity, $g$ should be nonnegative-definite on $M\backslash \mathring{M}$. Since $\Omega=-2g(J_++J_-)^{-1}$, the smoothness of $(J_++J_-)^{-1}$ implies that $g$ must be non-degenerate on $M\backslash \mathring{M}$  and therefore positive-definite there.
\end{proof}

Take a reference element $I\in DGK_\Omega^\mathbb{T}(M)$ and let $ \{\theta_j, \mu_j\}$ be its associated admissible coordinates and $\psi$ its associated matrix. We assume the matrix form of $J_+\in DGK_\Omega^\mathbb{T}(\mathring{M})$ is anti-diagonal w.r.t. this admissible coordinate system and $\phi$ the corresponding matrix. Since $d\theta$, $\partial_\mu$ have no global meaning on $M$, we would like to use $I^*d\mu$ and $I\partial_\theta$ to replace them. Precisely this means we take $\{I^*d\mu, d\mu\}$ as a frame of $T^*\mathring{M}$ and $\{\partial_\theta, I\partial_\theta\}$ as a frame of $T\mathring{M}$. Then the tensors $g$, $b$ have the following forms respectively:
\[g=(I^*d\mu)^T\otimes[\psi(\phi^{-1})_s\psi^T(I^*d\mu)]+(d\mu)^T\otimes[\phi_s d\mu],\]
\[ b=(I^*d\mu)^T\otimes[\psi(\phi^{-1})_a\psi^T(I^*d\mu)] + (d\mu)^T\otimes[ \phi_a d\mu], \]
and the tensor $[(J_++J_-)/2]^{-1}$ has the form
\[[(J_++J_-)/2]^{-1}=(I^* d\mu)^T\otimes[\psi(\phi_s)^{-1}\psi(I \partial_\theta)]-(d\mu)^T\otimes [[(\phi^{-1})_s]^{-1}\partial_\theta].\]
Note that if $g_0$, $b_0$ are the corresponding metric and 2-form associated to $I$, and $g-g_0$, $b-b_0$, $[(J_++J_-)/2]^{-1}-[(I+I^\Omega)/2]^{-1}$ can be extended smoothly to $M$, then by Lemma~\ref{c1} $J_+\in DGK_\Omega^\mathbb{T}(M)$ as well. This observation implies:
\begin{theorem}\label{com}
$J_+\in DGK_\Omega^\mathbb{T}(\mathring{M})$ is the restriction of an element in $DGK_\Omega^\mathbb{T}(M)$ if the following conditions are satisfied:\\
i) $\phi_s-\psi_s$ and $\psi\phi^{-1}\psi^T-\psi^T$ can be extended smoothly to $\Delta$;\\
%ii) $[(\phi^{-1})_s]^{-1}-[(\psi^{-1})_s]^{-1}$ and $\psi(\phi_s)^{-1}\psi-\psi(\psi_s)^{-1}\psi$ can be extended smoothly to $\Delta$.
ii) $\phi^T(\phi_s)^{-1}\phi-\psi^T(\psi_s)^{-1}\psi$ and $\psi(\phi_s)^{-1}\psi-\psi(\psi_s)^{-1}\psi$ can be extended smoothly to $\Delta$.
\end{theorem}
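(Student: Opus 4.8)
The plan is to reduce the extension problem to the three smooth-extension criteria furnished by Lemma~\ref{c1}, and then to verify them by a direct comparison with the global reference structure $I$. Since $I\in DGK_\Omega^\mathbb{T}(M)$ is genuinely global, its associated tensors $g_0$, $b_0$ and $[(I+I^\Omega)/2]^{-1}$ are already smooth on all of $M$; hence $g$, $b$ and $(J_++J_-)^{-1}$ extend smoothly to $M$ if and only if the differences $g-g_0$, $b-b_0$ and $[(J_++J_-)/2]^{-1}-[(I+I^\Omega)/2]^{-1}$ do. This is exactly the reason for writing everything in the global frames $\{I^*d\mu,\,d\mu\}$ of $T^*\mathring{M}$ and $\{\partial_\theta,\,I\partial_\theta\}$ of $T\mathring{M}$: the individual coefficient matrices (e.g. $\phi_s$ or $\psi(\phi^{-1})_s\psi^T$) typically blow up along $\partial\Delta$, because these frames degenerate where $\mathbb{T}$ fails to act freely, and it is only in the differences against $I$ that the singular leading behaviour cancels.

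First I would read off, term by term, the coefficient matrices of the three differences in these frames. Subtracting the expressions displayed just before the theorem (with the $I$-version obtained by setting $\phi=\psi$), one finds that $g-g_0$ has $(I^*d\mu)$-coefficient $\psi((\phi^{-1})_s-(\psi^{-1})_s)\psi^T$ and $(d\mu)$-coefficient $\phi_s-\psi_s$; that $b-b_0$ has coefficients $\psi((\phi^{-1})_a-(\psi^{-1})_a)\psi^T$ and $\phi_a-\psi_a$; and that the difference of inverses has coefficients $\psi((\phi_s)^{-1}-(\psi_s)^{-1})\psi$ and $[(\phi^{-1})_s]^{-1}-[(\psi^{-1})_s]^{-1}$. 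The term $\phi_a-\psi_a$ requires no hypothesis at all: both $\phi_a$ and $\psi_a$ are constant matrices, so $(d\mu)^T\otimes[\phi_a\,d\mu]$ is, up to scale, the globally defined $2$-form $\tfrac12\sum_{k,j} C_{kj}d\mu_j\wedge d\mu_k$ and is automatically smooth on $M$.

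The second step is the matrix algebra that identifies the remaining coefficients with the combinations in i) and ii). The key identities (appendix-level) are $\psi(\phi^{-1})_s\psi^T=(\psi\phi^{-1}\psi^T)_s$ and $\psi(\phi^{-1})_a\psi^T=(\psi\phi^{-1}\psi^T)_a$, together with $[(\phi^{-1})_s]^{-1}=\phi^T(\phi_s)^{-1}\phi$, which follows from $(\phi^{-1})_s=\phi^{-1}\phi_s(\phi^T)^{-1}$. Using the first pair and the analogous relations for $\psi$, the symmetric part of the $(I^*d\mu)$-coefficient of $g-g_0$ becomes $(\psi\phi^{-1}\psi^T-\psi^T)_s$ and the antisymmetric one in $b-b_0$ becomes $(\psi\phi^{-1}\psi^T-\psi^T)_a$; both are controlled once $\psi\phi^{-1}\psi^T-\psi^T$ extends, which is the second half of i), while the first half $\phi_s-\psi_s$ handles the $(d\mu)$-part of $g-g_0$. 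Using the third identity, the $(d\mu)$-coefficient of the difference of inverses is exactly $\phi^T(\phi_s)^{-1}\phi-\psi^T(\psi_s)^{-1}\psi$, the first expression in ii), and its $(I^*d\mu)$-coefficient is $\psi(\phi_s)^{-1}\psi-\psi(\psi_s)^{-1}\psi$, the second expression in ii). Hence i) and ii) are precisely what make all three differences extend smoothly to $\Delta$, and Lemma~\ref{c1} then gives $J_+\in GK_\Omega^\mathbb{T}(M)$; the anti-diagonal condition $J_+\mathcal{D}=J_-\mathcal{D}$ passes to $M$ by continuity, so $J_+\in DGK_\Omega^\mathbb{T}(M)$.

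I expect the main obstacle to be the bookkeeping of symmetric versus antisymmetric parts in the second step: one must check that the symmetric/antisymmetric projections of $\psi\phi^{-1}\psi^T-\psi^T$ really are the correct coefficients after subtracting the reference terms, in particular inserting $(\psi^T)_s=\psi_s$ and $(\psi^T)_a=-\psi_a$ with the right signs, and that $[(\phi^{-1})_s]^{-1}=\phi^T(\phi_s)^{-1}\phi$ is applied consistently to both $\phi$ and $\psi$. None of this is deep, but the proliferation of transposes makes it the likeliest place for a sign or ordering slip; the conceptual content — that comparison with $I$ cancels the boundary singularity — is already secured by the reduction in the first step.
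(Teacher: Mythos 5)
Your proposal is correct and follows essentially the same route as the paper's proof: comparison with the global reference $I$ in the frames $\{I^*d\mu,\,d\mu\}$ and $\{\partial_\theta,\,I\partial_\theta\}$, reduction to Lemma~\ref{c1}, and the appendix identities $\psi(\psi^{-1})_s\psi^T=\psi_s$, $\psi(\psi^{-1})_a\psi^T=-\psi_a$ and $[(\phi^{-1})_s]^{-1}=\phi^T(\phi_s)^{-1}\phi$ used to convert the intermediate condition (the paper's ii')) into i) and ii). The only points the paper leaves implicit---that $\phi_a-\psi_a$ is constant, hence harmless, and that the extended structure remains anti-diagonal---you address explicitly and correctly.
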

\begin{proof}By Lemma~\ref{c1} and the above argument, we know that $J_+\in DGK_\Omega^\mathbb{T}(M)$ if the condition i) and the following condition hold:\\
ii') $[(\phi^{-1})_s]^{-1}-[(\psi^{-1})_s]^{-1}$ and $\psi(\phi_s)^{-1}\psi-\psi(\psi_s)^{-1}\psi$ can be extended smoothly to $\Delta$.\\
Note that $(\phi^{-1})_s=\phi^{-1}\phi_s(\phi^{-1})^T$ and thus
\[[(\phi^{-1})_s]^{-1}=\phi^T(\phi_s)^{-1}\phi.\]
So our theorem holds as required.
\end{proof}
Our conditions i) and ii) imply Boulanger's condition (C3) in \cite[Thm.~7]{Bou}. However, Boulanger's condition (C3), which resorts to the positive-definiteness of a bilinear form on the boundary of $\Delta$, seems a bit complicated and not very practical for use.

If the background $I$ is actually a K$\ddot{a}$hler structure, then $\psi=\psi_s$ and Thm.~\ref{com} can be simplified:
\begin{corollary}\label{c2}If $I\in DGK_\Omega^\mathbb{T}(M)$ is a K$\ddot{a}$hler structure, then $J_+\in DGK_\Omega^\mathbb{T}(\mathring{M})$ is the restriction of an element in $DGK_\Omega^\mathbb{T}(M)$ if the following conditions are satisfied:\\
i) $\phi_s-\psi$ and $\psi\phi^{-1}\psi-\psi$ can be extended smoothly to $\Delta$;\\
ii) $\phi^T(\phi_s)^{-1}\phi-\psi$ and $\psi(\phi_s)^{-1}\psi-\psi$ can be extended smoothly to $\Delta$.
\end{corollary}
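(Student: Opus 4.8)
The plan is to obtain Corollary~\ref{c2} as an immediate specialization of Thm.~\ref{com}, the only new input being the structural simplification that occurs when the reference element $I$ is genuinely K\"ahler. First I would record why this forces the associated matrix $\psi$ to be symmetric: a K\"ahler structure is precisely an anti-diagonal toric GK structure whose two underlying complex structures coincide, so its defining matrix has vanishing anti-symmetric part (the constant matrix $C$ of \S~\ref{loc} is zero). Equivalently, $\psi$ is the Hessian of the symplectic potential of $I$ in the sense of Abreu--Guillemin theory, hence symmetric and positive-definite. Thus $\psi=\psi_s=\psi^T$ and, in particular, $(\psi_s)^{-1}=\psi^{-1}$.

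With this in hand, the proof is a term-by-term substitution into the four extendability conditions of Thm.~\ref{com}. For condition i), the first expression $\phi_s-\psi_s$ becomes $\phi_s-\psi$ directly, while the second $\psi\phi^{-1}\psi^T-\psi^T$ collapses to $\psi\phi^{-1}\psi-\psi$ upon replacing $\psi^T$ by $\psi$. For condition ii), I would simplify $\psi^T(\psi_s)^{-1}\psi=\psi\,\psi^{-1}\psi=\psi$ and likewise $\psi(\psi_s)^{-1}\psi=\psi\,\psi^{-1}\psi=\psi$, so that the two expressions $\phi^T(\phi_s)^{-1}\phi-\psi^T(\psi_s)^{-1}\psi$ and $\psi(\phi_s)^{-1}\psi-\psi(\psi_s)^{-1}\psi$ reduce to $\phi^T(\phi_s)^{-1}\phi-\psi$ and $\psi(\phi_s)^{-1}\psi-\psi$ respectively. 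These are exactly the four expressions listed in the corollary, so the hypotheses stated there are precisely the hypotheses of Thm.~\ref{com} specialized to the K\"ahler $I$, and the conclusion $J_+\in DGK_\Omega^\mathbb{T}(M)$ follows at once.

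There is essentially no analytic obstacle here; the content is bookkeeping. The only point that deserves a sentence of justification is the symmetry of $\psi$, which I would make rigorous by appealing to the characterization of $DGK_\Omega^\mathbb{T}(M)$ in \S~\ref{loc}: the anti-symmetric part of the defining matrix is the constant matrix governing the underlying holomorphic Poisson deformation, and it vanishes exactly when $J_+=J_-$, i.e. when the structure is K\"ahler. Granting this, each of the four identities above is elementary matrix algebra using only $\psi=\psi^T$, and no continuity or boundary analysis beyond what is already packaged into Lemma~\ref{c1} and Thm.~\ref{com} is required.
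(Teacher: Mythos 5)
Your proposal is correct and coincides with the paper's own treatment: the paper derives Corollary~\ref{c2} exactly by noting that a K\"ahler reference element has $\psi=\psi_s$ (since $J_+=J_-=I$ forces the constant anti-symmetric part to vanish, leaving the Hessian of the symplectic potential) and then substituting this into the four conditions of Thm.~\ref{com}. Your term-by-term simplifications, including $\psi^T(\psi_s)^{-1}\psi=\psi(\psi_s)^{-1}\psi=\psi$, are precisely the intended bookkeeping, so nothing is missing.
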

As main applications of Thm.~\ref{com} and Corallary.~\ref{c2}, we prove two theorems.
\begin{theorem}Let $J_+\in DGK_\Omega^\mathbb{T}(M)$ and $\tau$ be the symplectic potential in Boulanger's theory (or in Thm.~\ref{GK} with $F_\zeta=0$). Then the same $\tau$ gives rise to an element $J_0\in K_\Omega^{\mathbb{T}}(M)$ in the sense of Abreu-Guillemin theory (or of Thm.~\ref{GK} with $F_\zeta=C=0$).
\end{theorem}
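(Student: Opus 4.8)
The plan is to obtain this as a direct application of Corollary~\ref{c2}. Fix once and for all a reference toric K\"ahler structure $I\in K_\Omega^{\mathbb{T}}(M)\subset DGK_\Omega^{\mathbb{T}}(M)$, which exists on any compact toric symplectic manifold (for instance the canonical one supplied by the Delzant construction), and let $\psi=\psi_s$ be its symmetric associated matrix, the Hessian of its symplectic potential $\tau_0$. The candidate is the toric almost complex structure $J_0$ whose associated matrix is $\phi_s=\mathrm{Hess}(\tau)$, i.e. the anti-diagonal element attached to the triple with $C=0$. Feeding $\phi=\phi_s$ into Corollary~\ref{c2} and using $\phi_s^T(\phi_s)^{-1}\phi_s=\phi_s$, the two pairs of conditions there collapse to the single pair: $\phi_s-\psi$ and $\psi(\phi_s)^{-1}\psi-\psi$ extend smoothly to $\Delta$. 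Thus the whole statement reduces to verifying these two extension claims.

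First I would dispose of the second condition by reducing it to the first. Writing $S:=\phi_s-\psi$ and substituting $\psi=\phi_s-S$ gives the elementary identity $\psi(\phi_s)^{-1}\psi-\psi=-S+S(\phi_s)^{-1}S$. Hence, once we know that both $S=\phi_s-\psi$ and $(\phi_s)^{-1}$ extend smoothly to $\Delta$, the second condition follows automatically. The extension of $(\phi_s)^{-1}$ is the easy half: since $J_+\in DGK_\Omega^{\mathbb{T}}(M)$, Lemma~\ref{c1} guarantees that $g$ is a genuine smooth positive-definite metric on all of $M$, so $g^{-1}$ is a smooth cometric; pairing it against the \emph{globally defined} $1$-forms $d\mu_j$ gives $g^{-1}(d\mu_j,d\mu_k)=[(\phi_s)^{-1}]_{jk}$, which are therefore smooth functions on $M$, i.e. $(\phi_s)^{-1}$ extends smoothly to $\Delta$.

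The crux, and the step I expect to be the main obstacle, is the first condition: $\phi_s-\psi$ extends smoothly to $\Delta$. The difficulty is that $\phi_s=\mathrm{Hess}(\tau)$ blows up along $\partial\Delta$, so it cannot be read off from any single smooth global tensor; the frames $\{d\theta_j,d\mu_j\}$, $\{\partial_{\theta_j},\partial_{\mu_j}\}$ degenerate there, and the smooth difference $g-g_0$ only controls coefficients contracted against the degenerating coframe. The content of this step is precisely the assertion that the symplectic potential $\tau$ of the \emph{compactifiable} GK structure $J_+$ has the same canonical boundary singularity as the potential $\tau_0$ of a genuine toric K\"ahler metric. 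I would extract this from the smoothness and positive-definiteness of the GK metric $g$ on $M$: its $d\mu\,d\mu$-block is exactly $\phi_s$, identical to that of the K\"ahler metric built from $\tau$, so applying the Abreu--Guillemin boundary analysis to this block forces $\phi_s$ to differ from the canonical Hessian by a term smooth on $\Delta$; since $\psi$ has the same property, $\phi_s-\psi=\mathrm{Hess}(\tau-\tau_0)$ extends smoothly to $\Delta$.

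With both conditions established, Corollary~\ref{c2} yields $J_0\in DGK_\Omega^{\mathbb{T}}(M)$, and since its associated matrix $\phi_s$ is symmetric (so $C=0$), this element is exactly the claimed toric K\"ahler structure $J_0\in K_\Omega^{\mathbb{T}}(M)$ with symplectic potential $\tau$ in the sense of Abreu--Guillemin theory.
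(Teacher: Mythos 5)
Your reductions are fine as far as they go: the collapse of the four conditions of Corollary~\ref{c2} to the pair $\phi_s-\psi$, $\psi(\phi_s)^{-1}\psi-\psi$, the identity $\psi(\phi_s)^{-1}\psi-\psi=-S+S(\phi_s)^{-1}S$ with $S=\phi_s-\psi$, and the smooth extension of $(\phi_s)^{-1}$ via $g^{-1}(d\mu_j,d\mu_k)=[(\phi_s)^{-1}]_{jk}$ are all correct --- indeed the last is exactly the paper's key computation in disguise, since $Y_j=-g^{-1}d\mu_j$ and the paper extends $(\phi^{-1})^{kj}=-d\mu_k(Y_j)$ using the globality of $Y_j$ and $\mu_k$. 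But the step you yourself flag as the crux --- that $S=\mathrm{Hess}(\tau-\tau_0)$ extends smoothly to $\Delta$ --- is asserted, not proven, and the appeal to ``Abreu--Guillemin boundary analysis'' does not close it. That analysis characterizes when a potential $\tau$ compactifies to a toric \emph{K\"ahler} structure, and the characterization involves not just zeroth-order smoothness of $\mathbf{H}=(\mathrm{Hess}\,\tau)^{-1}$ on $\Delta$ (which your $g^{-1}$ argument does give) but also first-order boundary normalizations along each facet, which encode the smooth closing-up of the degenerating circle directions; nothing in your proposal verifies these from the smoothness of the GK metric $g$, whose $d\theta\,d\theta$-block is $(\phi^{-1})_s=\phi^{-1}\phi_s(\phi^{-1})^T$, \emph{not} the K\"ahler block $(\phi_s)^{-1}$. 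Worse, the conclusion you want at this step, $\tau-\tau_0\in C^\infty(\Delta)$, is precisely the condition $\tau\in\mathcal{K}$ of \cite[Prop.~5]{Ap}, i.e.\ it is \emph{equivalent} to the theorem being proven; if you had it, $J_0\in K_\Omega^{\mathbb{T}}(M)$ would follow directly and the whole Corollary~\ref{c2} scaffolding would be redundant. So the proposal is circular at its central point.

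The paper avoids this entirely by a different choice of reference: it applies Theorem~\ref{com} (not Corollary~\ref{c2}) with the \emph{non-K\"ahler} reference $I=J_+$ itself, which is permitted since Theorem~\ref{com} allows any element of $DGK_\Omega^{\mathbb{T}}(M)$ as background. Then $\psi=\phi+C$ with $\phi=\phi_s=\mathrm{Hess}(\tau)$, so $\psi_s=\phi_s$ and the difference conditions collapse algebraically: $(\phi+C)\phi^{-1}(\phi-C)-\phi=-C\phi^{-1}C$ and $\phi-(\phi-C)\phi^{-1}(\phi+C)=C\phi^{-1}C$, leaving only the smooth extendability of $\phi^{-1}=(\phi_s)^{-1}$ to $\Delta$ --- which is exactly the easy half you already established. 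In other words, by comparing $J_0$ against a reference whose matrix differs from $\phi_s$ by the \emph{constant} $C$ (hence has the same boundary singularity built in), the hard boundary analysis you tried to invoke becomes unnecessary. If you want to salvage your route with an external K\"ahler reference, you would have to actually prove the boundary conditions on $\mathbf{H}$ from the GK compactification, which is substantially harder than the theorem's intended proof.
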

\begin{proof}We take $I=J_+$ as the reference toric GK structure. Let $\phi$ be the Hessian of $\tau$ and thus $\psi=\phi+C$ for some constant anti-symmetric matrix $C$. Then the matrix
\[\left(
    \begin{array}{cc}
      0 & -\phi^{-1} \\
      \phi & 0 \\
    \end{array}
  \right)
\]
relative to the admissible coordinates associated to $J_+$ obviously defines an element $J_0\in K_\Omega^{\mathbb{T}}(\mathring{M})$ (certainly this is the average complex structure investigated in the former subsection).

To see $J_0$ is actually defined globally on $M$, according to Thm.~\ref{com}, we only need to prove that
 both $(\phi+C)\phi^{-1}(\phi-C)-\phi$ and $\phi-(\phi-C)\phi^{-1}(\phi+C)$ admit a smooth extension to $\Delta$.

 Note that
 \begin{eqnarray*}(\phi+C)\phi^{-1}(\phi-C)-\phi&=&(\textup{I}+C\phi^{-1})(\phi-C)-\phi\\
 &=&\phi+C-C-C\phi^{-1}C-\phi\\
 &=&-C\phi^{-1}C\end{eqnarray*}
 and similarly
 \[\phi-(\phi-C)\phi^{-1}(\phi+C)=C\phi^{-1}C.\]
 Therefore we only need to check that $\phi^{-1}$ admits a smooth extension to $\Delta$.

 We know from Thm.~\ref{coi} that on $\mathring{M}$
 \[J_0\partial_{\theta_j}=Y_j=-\mathbb{J}_1(\partial_{\theta_j}-b(\partial_{\theta_j}))\]
 and meanwhile $J_0\partial_{\theta_j}=-\sum_k(\phi^{-1})^{kj}\partial_{\mu_k}$. Therefore, on $\mathring{M}$
 \[-d\mu_k(Y_j)=d\mu_k(\sum_l(\phi^{-1})^{lj}\partial_{\mu_l})=(\phi^{-1})^{kj}.\]
 Since both $\mu_k$ and $Y_j$ are globally defined on $M$, we know that these $-d\mu_k(Y_j)$'s are $\mathbb{T}$-invariant smooth functions on $M$, or equivalently smooth functions extending $(\phi^{-1})^{kj}$ to the whole of $\Delta$. This completes our proof.
\end{proof}
\emph{Remark}. In \cite{Bou}, Boulanger obtained a similar result in real dimension 4. He had observed that a K$\ddot{a}$hler structure associated to $J_+\in DGK_\Omega^\mathbb{T}(M)$ exists canonically in this dimension. In general he was not sure if $\phi^{-1}$ in the above proof is smooth on $\Delta$ or not. It is the global picture provided by \cite{Wang} that facilitates our analysis greatly. One should note the role played by the metric splitting (this was missing in \cite{Bou}) in our proof, justifying our introduction of the general formalism in \S~2 and \S~3.

The above theorem finishes our argument that the average complex structure $J_0$ in the former subsection is actually a global object on $M$. As a result, we can conclude that $\beta$ defined in the former subsection is a global holomorphic Poisson structure relative to $J_0$ and $\mathbb{J}_1$ has to be a B-transform of $\mathbb{J}_\beta$. Now in our present setting, we also come to a good understanding of the mysterious GC action of $\mathbb{T}_\mathbb{C}$.
\begin{corollary} For $J_+\in DGK_\Omega^\mathbb{T}(M)$, the associated GC action of $\mathbb{T}_\mathbb{C}$ covers the ordinary complexification of the $J_0$-holomorphic action of $\mathbb{T}$.
\end{corollary}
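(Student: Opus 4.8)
The plan is to compare the two $\mathbb{T}_{\mathbb{C}}$-actions at the infinitesimal level and then appeal to connectedness to promote the comparison to the group level. Recall that the generalized complexified action was defined in \S\ref{str} via
\[
\varphi_{\mathbb{C}}(\varsigma+\sqrt{-1}\zeta)=X_\varsigma+\xi_\varsigma-\mathbb{J}_1(X_\zeta+\xi_\zeta)=X_\varsigma+\xi_\varsigma+Y_\zeta,
\]
so the imaginary directions are generated by the sections $Y_j=-\mathbb{J}_1(X_j+\xi_j)$. The anchor $\pi:E\to TM$ sends $\varphi_{\mathbb{C}}(\sqrt{-1}e_j)$ to the vector field $\pi(Y_j)$, and this is precisely the vector field that drives the underlying $\mathbb{T}_{\mathbb{C}}$-action on $M$. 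So the statement ``the GC action covers the complexification of the $J_0$-holomorphic $\mathbb{T}$-action'' amounts to showing that $\pi(Y_j)=J_0 X_j$ as vector fields on $M$, since the complexification of a holomorphic torus action is generated in the imaginary directions exactly by applying the complex structure to the fundamental fields.

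First I would establish the identity on the open dense set $\mathring{M}$. By the defining relation $J_0 X_\varsigma=Y_\varsigma$ from \S\ref{str} (here $Y_\varsigma$ denotes the vector-field part, i.e.\ $\pi(Y_\varsigma)$), together with Thm.~\ref{coi} which identifies $J_0$ on $\mathring{M}$ with the average complex structure $J_0'$ whose matrix form is $\left(\begin{smallmatrix}0 & -(\phi_s)^{-1}\\ \phi_s & 0\end{smallmatrix}\right)$, the equality $\pi(Y_j)=J_0 X_j=J_0\,\partial_{\theta_j}=\phi_s\partial_{\mu_j}$ is immediate from the computation already carried out in the proof of Thm.~\ref{coi}. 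Thus on $\mathring{M}$ the imaginary generators of the GC action agree with $J_0$ applied to the real generators, which is exactly the infinitesimal data of the complexified $J_0$-holomorphic action.

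Next I would extend this from $\mathring{M}$ to all of $M$. The preceding theorem has just shown that $J_0$ is a globally defined complex structure on $M$ (and $\mathring{M}$ is open and dense), while the $Y_j$ are global sections of $E$ by construction; hence both $\pi(Y_j)$ and $J_0 X_j$ are smooth global vector fields on $M$ agreeing on the dense set $\mathring{M}$, so they agree everywhere by continuity. This shows the infinitesimal GC action and the infinitesimal complexified $J_0$-holomorphic action have the same underlying vector fields on $M$. Since $\mathbb{T}_{\mathbb{C}}$ is connected and $M$ is compact, both flows integrate and the two group actions on $M$ coincide under the anchor; that is, the GC action covers the ordinary complexification of the $J_0$-holomorphic $\mathbb{T}$-action.

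The main obstacle is purely bookkeeping rather than conceptual: one must check that the ``imaginary'' generators $\pi(Y_j)$ really match the vector fields produced by complexifying a $J_0$-holomorphic action in the standard sense, i.e.\ that the $\mathbb{T}$-action genuinely is holomorphic for $J_0$ (which follows since $J_0$ is $\mathbb{T}$-invariant and $X_j$ preserves $J_0$) and that $J_0 X_j$ is indeed the correct infinitesimal generator of the imaginary part of the complexified action. Once the identity $\pi(Y_j)=J_0 X_j$ is in place globally, the covering statement is forced.
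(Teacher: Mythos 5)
Your proposal is correct and follows essentially the same route as the paper: the paper's proof consists precisely of the observation that at the infinitesimal level the extended $\mathfrak{t}_\mathbb{C}$-action covers the vector field action $e_j\mapsto X_j$, $\sqrt{-1}e_j\mapsto Y_j=J_0X_j$, which is by definition the ordinary $J_0$-complexification. Your additional density/continuity step extending the identity $\pi(Y_j)=J_0X_j$ from $\mathring{M}$ to $M$, and the integration to the group level, simply make explicit what the paper dismisses as ``obvious'' after having established globality of $J_0$ in the preceding theorem.
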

\begin{proof}
The result is obvious since at the infinitesimal level the extended $\mathfrak{t}_\mathbb{C}$-action covers the $\mathfrak{t}_\mathbb{C}$-action defined by
\[e_j\mapsto X_j,\quad \sqrt{-1}e_j\mapsto J_0X_j=Y_j,\]
which is the ordinary $J_0$-complexification of the $\mathfrak{t}$-action.
\end{proof}
In the other direction, we have
\begin{theorem}\label{c3}If $I\in K_\Omega^{\mathbb{T}}(M)$, $\tau$ is the symplectic potential of $I$ in Abreu-Guillemin theory, and $C$ is an $n\times n$ anti-symmetric constant matrix, then the pair $(\tau, C)$ gives rise to an element in $DGK_\Omega^\mathbb{T}(M)$ in the manner of Boulanger's theory.
\end{theorem}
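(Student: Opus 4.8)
The plan is to realize the pair $(\tau, C)$ as an anti-diagonal structure on the dense open set $\mathring M$ and then to invoke the compactification criterion of Corollary~\ref{c2}. Let $\psi := \textup{Hess}(\tau)$ be the symmetric, positive-definite matrix associated to $I$, and let $\{\theta_j, \mu_j\}$ be the admissible coordinates attached to $I$. First I would define $J_+$ on $\mathring M$ by declaring its matrix, in these coordinates, to be anti-diagonal with $\phi := \psi + C$, so that $\phi_s = \psi$ and $\phi_a = C$. Since $\phi_s = \psi$ is positive-definite, and since by the congruence $(\phi^{-1})_s = \phi^{-1}\phi_s(\phi^{-1})^T$ so is $(\phi^{-1})_s$, Theorem~\ref{GK} with $F_\zeta = 0$ guarantees $J_+ \in DGK_\Omega^{\mathbb{T}}(\mathring M)$. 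The whole problem is thereby reduced to showing that this $J_+$ is the restriction of a global element of $DGK_\Omega^{\mathbb{T}}(M)$.

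For this I would take $I$ itself as the reference K\"ahler structure in Corollary~\ref{c2}, whose associated matrix is then $\psi = \psi_s$. Two of the four tensors occurring in conditions i) and ii) vanish identically, namely $\phi_s - \psi = 0$ and $\psi(\phi_s)^{-1}\psi - \psi = 0$. For the remaining two, a short reduction using $\phi = \psi + C$, $\phi^T = \psi - C$ and the resolvent identity $(\psi+C)^{-1} - \psi^{-1} = -(\psi+C)^{-1}C\psi^{-1}$ yields
\[
\phi^T(\phi_s)^{-1}\phi - \psi = (\psi - C)\psi^{-1}(\psi + C) - \psi = -C\psi^{-1}C,
\]
and
\[
\psi\phi^{-1}\psi - \psi = \psi\big[(\psi+C)^{-1} - \psi^{-1}\big]\psi = -(\textup{I} + C\psi^{-1})^{-1}C.
\]
Hence all four smooth-extension requirements collapse to two: that $\psi^{-1}$ extend smoothly to all of $\Delta$, and that $\textup{I} + C\psi^{-1}$ be invertible at every point of $\Delta$, so that $(\textup{I}+C\psi^{-1})^{-1}$ is smooth there as well.

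The first point is inherited from the fact that $I$ is already a \emph{global} toric K\"ahler structure: exactly as in the proof of the preceding theorem, the entries $(\psi^{-1})^{kj} = -d\mu_k(I X_j)$ are $\mathbb{T}$-invariant smooth functions on $M$ (here $X_j = \partial_{\theta_j}$ are the global fundamental vector fields and $I X_j$ is globally smooth), so $\psi^{-1}$ extends smoothly across $\partial\Delta$. I expect the genuine obstacle to be the second point, the invertibility of $\textup{I} + C\psi^{-1}$ on the boundary $\partial\Delta$, where $P := \psi^{-1}$ degenerates to a merely positive-\emph{semidefinite} matrix and one can no longer argue via the finiteness of $\psi$. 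Here I would exploit the anti-symmetry of $C$ together with the elementary matrix facts of the appendix: using $\det(\textup{I}+AB) = \det(\textup{I}+BA)$ and a symmetric square root $P = P^{1/2}P^{1/2}$ one finds $\det(\textup{I}+CP) = \det(\textup{I} + P^{1/2}CP^{1/2})$, and since $P^{1/2}CP^{1/2}$ is real and anti-symmetric its eigenvalues are purely imaginary, forcing $\det(\textup{I}+CP) \geq 1 > 0$ throughout $\Delta$. With both points in hand, Corollary~\ref{c2} applies and $J_+ \in DGK_\Omega^{\mathbb{T}}(M)$, which is the desired conclusion.
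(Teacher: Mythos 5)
Your proposal is correct and follows essentially the same route as the paper: reduce to Corollary~\ref{c2} with $I$ as reference (two of the four tensors vanishing since $\phi_s=\psi$), compute the remaining obstructions as $-C\psi^{-1}C$ and a resolvent expression, extend $\psi^{-1}$ smoothly to $\Delta$ via the globally defined pairing of $d\mu_k$ with $IX_j$, and establish invertibility of $\textup{I}+C\psi^{-1}$ through $\det(\textup{I}+\psi^{-1/2}C\psi^{-1/2})\geq 1$ using anti-symmetry. The only (harmless) variation is that you run the square-root determinant bound directly on the positive-semidefinite extension of $\psi^{-1}$ at $\partial\Delta$, where the paper instead bounds the determinant on $\mathring{\Delta}$ and invokes continuity.
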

\begin{proof}Now we take $I$ as the reference toric GK (actually K$\ddot{a}$hler) structure. Let $\psi$ be the Hessian of $\tau$ and $\phi=\psi+C$. Thus the matrix form
\[\left(
    \begin{array}{cc}
      0 & -\phi^{-1} \\
      \phi & 0 \\
    \end{array}
  \right)
\]
relative to the admissible coordinates of $I$ defines an element $J_+\in DGK_\Omega^\mathbb{T}(\mathring{M})$. To see $J_+$ actually lives in $DGK_\Omega^\mathbb{T}(M)$, by Corallary.~\ref{c2}, we have to prove both $\psi(\psi+C)^{-1}\psi-\psi$ and
$(\psi-C)\psi^{-1}(\psi+C)-\psi$ can be smoothly extended to $\Delta$.
Note that \[\psi(\psi+C)^{-1}\psi-\psi=-C(\psi+C)^{-1}\psi=-C(\textup{I}+\psi^{-1}C)^{-1}\]
and
\[(\psi-C)\psi^{-1}(\psi+C)-\psi=-C\psi^{-1}C.\]
 Note that $\psi^{-1}$ is smooth on $\Delta$, because $(\psi^{-1})^{kj}=\Omega(\partial_{\theta_j},I\partial_{\theta_k})$ is actually globally defined on $M$. Thus our proof amounts to checking that $\textup{I}+\psi^{-1}C$ is invertible on $\Delta$. Since $\psi^{-1}$ is positive-definite in $\mathring{\Delta}$, we can take the square root $\psi^{-1/2}$ and obtain
\[\det(\textup{I}+\psi^{-1}C)=\det(\psi^{-1/2})\cdot\det(\psi+C)\cdot \det(\psi^{-1/2})=\det(\textup{I}+\psi^{-1/2}C\psi^{-1/2}).\]
Notice that $\psi^{-1/2}C\psi^{-1/2}$ is anti-symmetric, and thus that in $\mathring{\Delta}$ we must have
\[\det(\textup{I}+\psi^{-1/2}C\psi^{-1/2})\geq 1.\]
By continuity, we have on $\Delta$ that $\det(\textup{I}+\psi^{-1}C)\geq 1$ and thus $\textup{I}+\psi^{-1}C$ is both smooth and invertible on $\Delta$. This completes our proof.
\end{proof}
\emph{Remark}. In \cite{Bou}, a similar result is obtained from the viewpoint of deformation theory of GK structures: given a compact toric K$\ddot{a}$hler manifold, the Hessian $\psi$ of the symplectic potential, together with an anti-symmetric matrix $C$ which is sufficiently small, produces a toric GK structure on $M$. Our theorem is a strengthened version of this result, \emph{without the smallness requirement of $C$}. In this respect, our theorem also goes beyond the scope of Goto's stability theorem.

Combining the two theorems together, we now can conclude that any element in $DGK_\Omega^\mathbb{T}(M)$ is actually obtained from a genuine toric K$\ddot{a}$hler structure by inputting an additional anti-symmetric matrix $C$ and any such prescription will give rise to an element in $DGK_\Omega^\mathbb{T}(M)$.

Recall that on a compact toric symplectic manifold $(M,\Omega, \mathbb{T}, \mu)$, the space of compatible toric K$\ddot{a}$hler structures is, modulo the action of $\mathbb{T}$-equivariant symplecomorphisms, a subspace $\mathcal{K}$ of continuous functions $\tau$ (the symplectic potential) on the moment polytope $\Delta$ satisfying the following two conditions \cite[Prop.~5]{Ap}:\\
i) The restriction of $\tau$ to any open face of $\Delta$ is a smooth strictly convex function;\\
ii) $\tau-\tau_0$ is smooth on $\Delta$, where $\tau_0$ is the standard symplectic potential associated to $\Delta$ (see Eq.~(\ref{formu})).
According to our theorems, we have the following proposition.
\begin{theorem}For a given compact toric symplectic manifold $(M,\Omega, \mathbb{T}, \mu)$, the space $\mathcal{DGK}$ of anti-diagonal toric GK structures of symplectic type modulo the action of $\mathbb{T}$-equivariant symplectomorphisms, is the product of $\mathcal{K}$ and the space $\mathcal{C}$ of $n\times n$ anti-symmetric constant matrices.
\end{theorem}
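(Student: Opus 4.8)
The plan is to upgrade the two theorems just established---the one attaching to each $J_+\in DGK_\Omega^\mathbb{T}(M)$ a \emph{global} toric Kähler structure $J_0$ with symplectic potential $\tau$, and Thm.~\ref{c3}, which builds an element of $DGK_\Omega^\mathbb{T}(M)$ from an arbitrary pair (toric Kähler structure, anti-symmetric matrix)---into a statement about the two quotients. Fixing once and for all the basis $\{e_j\}$ of $\mathfrak{t}$, I would first record the pre-quotient bijection
\[\Phi:DGK_\Omega^\mathbb{T}(M)\longrightarrow K_\Omega^\mathbb{T}(M)\times\mathcal{C},\qquad J_+\longmapsto (J_0,\,C),\]
where $J_0$ is the average Kähler structure of $J_+$ and $C=\phi_a$ is the constant anti-symmetric part of the matrix of $J_+$ in its admissible coordinates. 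That $J_0$ genuinely lies in $K_\Omega^\mathbb{T}(M)$ is the content of the former theorem, and that every $C\in\mathcal{C}$ occurs is Thm.~\ref{c3}; the two constructions are mutually inverse through the decomposition $\phi=\phi_s+\phi_a$, with $\phi_s$ the Hessian of the potential of $J_0$ and $\phi_a=C$, so $\Phi$ is a bijection. Taking $C=0$ embeds $K_\Omega^\mathbb{T}(M)$ as a section, exhibiting $DGK_\Omega^\mathbb{T}(M)$ as the product $K_\Omega^\mathbb{T}(M)\times\mathcal{C}$ at the pre-quotient level.

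Next I would prove that $\Phi$ is equivariant for the group $G$ of $\mathbb{T}$-equivariant symplectomorphisms, which acts on $DGK_\Omega^\mathbb{T}(M)$ and on $K_\Omega^\mathbb{T}(M)$ by push-forward; the claim is that on the target it acts by push-forward on the first factor and \emph{trivially} on $\mathcal{C}$. Naturality of the first component is immediate: the average construction $J_+\mapsto J_0$ is defined intrinsically, via the generalized complex $\mathbb{T}_\mathbb{C}$-action and the restriction of $-\mathbb{J}_1$ to $\mathbb{K}\oplus\mathbb{J}_1\mathbb{K}$ (cf. Thm.~\ref{coi}), so any $\Psi\in G$ satisfies $J_0(\Psi_*J_+)=\Psi_*\bigl(J_0(J_+)\bigr)$. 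For the second component I would use the Remark following Thm.~\ref{hol}: the canonical holomorphic Poisson tensor $\beta$ of Eq.~(\ref{Po}) is recovered from the skew form $c=\frac{1}{2}\sum_{j,k}C_{kj}\,e_j\wedge e_k\in\wedge^2\mathfrak{t}$ by replacing each $e_j$ with the $J_0$-holomorphic part of $X_j$. Since $\Psi$ commutes with the $\mathbb{T}$-action one has $\Psi_*X_j=X_j$ and $\Psi^*\mu=\mu$, so $\Psi$ induces the identity on $\mathfrak{t}$ and hence on $\wedge^2\mathfrak{t}$; as $\beta$ is canonically associated with $\mathbb{J}_1$ and thus transforms by push-forward, reading off $c$ from $\Psi_*\beta$ against the unchanged generators $X_j$ returns the same matrix, i.e. $C(\Psi_*J_+)=C(J_+)$. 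This equivariance---above all the coordinate-freeness of $C$ and its invariance under the symplectomorphisms that do move the Kähler potential---is the step I expect to be the crux.

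Granting the $G$-equivariance of $\Phi$, the remainder is formal. Because $G$ acts trivially on $\mathcal{C}$, quotient and product commute,
\[\mathcal{DGK}=DGK_\Omega^\mathbb{T}(M)/G\;\cong\;\bigl(K_\Omega^\mathbb{T}(M)\times\mathcal{C}\bigr)/G\;=\;\bigl(K_\Omega^\mathbb{T}(M)/G\bigr)\times\mathcal{C}.\]
Invoking finally the Abreu--Guillemin identification $K_\Omega^\mathbb{T}(M)/G=\mathcal{K}$ with the space of symplectic potentials subject to conditions~i) and ii) (\cite[Prop.~5]{Ap}), I obtain $\mathcal{DGK}=\mathcal{K}\times\mathcal{C}$, as claimed.
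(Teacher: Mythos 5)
Your proposal is correct and takes essentially the same route as the paper: both rest on the two preceding theorems for the correspondence $J_+\leftrightarrow(\tau,C)$, on \cite[Prop.~5]{Ap} for the K\"ahler factor, and on the key observation that a $\mathbb{T}$-equivariant symplectomorphism $\Psi$ transports the canonical Poisson tensor of Eq.~(\ref{Po}) consistently with the \emph{same} matrix $C$, because $\Psi_*X_j=X_j$. The only difference is organizational: the paper verifies directly that two elements sharing a pair $(\tau,C)$ are equivalent, whereas you package the identical ingredients as $G$-equivariance of a pre-quotient bijection (thereby making explicit the invariance of $C$ under $G$, which the paper leaves implicit) and then pass to quotients formally.
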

\begin{proof}If two elements in $DGK_\Omega^\mathbb{T}(M)$ have the same pair $(\tau, C)$ to characterize them, then the underlying K$\ddot{a}$hler structures $J_1, J_2$ are related by a $\mathbb{T}$-equivariant symplectomorphism $\Phi$. $\Phi$, as a holomorphic isomorphism between $(M, J_1)$ and $(M, J_2)$, also transforms the $J_1$-holomorphic Poisson structure associated to $C$ to the $J_2$-holomorphic Poisson structure associated to $C$, by simply replacing in Eq.~(\ref{Po}) the $J_1$-holomorphic vector fields $X_j^h$'s with their corresponding $J_2$-holomorphic vector fields.
\end{proof}
\subsection{GK submanifolds and type-jumping}\label{sub}
We have noted that points in $\mathring{M}$ are all regular for $\mathbb{J}_1\in DGK_\Omega^{\mathbb{T}}(M)$ and therefore irregular points necessarily lie in $M\backslash\mathring{M}$. On the other side, fixed points (always exist!) of the $\mathbb{T}$-action are of complex type because the $J_0$-holomorphic Poisson structure $\beta$ vanishes there. This subsection is then for completeness to sketch what happens to $M\backslash\mathring{M}$.

Since $\mathbb{J}_1$ is always a B-filed transform of $\mathbb{J}_\beta$, we can resort to some very general arguments concerning such structures in the existing literature. Especially Goto's work in \cite{Go} provides almost all the necessary material.

First, we review some basic notions.

\begin{definition}If $(M, J, \beta)$ is a $J$-holomorphic Poisson manifold with $\beta$ being its Poisson structure, a complex submanifold $X$ is called a Poisson submanifold if its defining ideal sheaf $I_X$ is a Poisson ideal of the structural sheaf $\mathcal{O}_M$, i.e., for any $f\in I_X$ and $g\in \mathcal{O}_M$ we have $\beta(df, dg)\in I_X$.
\end{definition}
Our toric GK manifold $M$ now carries a GC action of $\mathbb{T}_\mathbb{C}$, which covers an ordinary $J_0$-holomorphic action of $\mathbb{T}_\mathbb{C}$ on $M$. In this sense, $M$ is a complex toric manifold, i.e., an $n$-dimensional complex manifold carrying a holomorphic $\mathbb{T}^n_\mathbb{C}$-action having an open dense orbit. The $\mathbb{T}_\mathbb{C}$-action preserves the Poisson structure $\beta$, which takes a very special form like (\ref{Po}). This is precisely what Goto has considered in \cite[Example.~1.23]{Go}. As pointed out by Goto, for such a complex toric manifold, each toric submanifold is a Poisson submanifold of $\beta$. In our context, for each open face $F$ of $\Delta$, $M_F:=\mu^{-1}(\bar{F})$ is a complex toric submanifold and thus we know that $\mu^{-1}(\bar{F})$ is a Poisson submanifold of $\beta$.
\begin{definition} A submanifold $X$ of a GK manifold $(M, \mathbb{J}_1, \mathbb{J}_2)$ is called a GK submanifold if the pull-back $\iota^*L_1$ and $\iota^*L_2$ ($\iota$ is the inclusion map $\iota: X\hookrightarrow M$) of the corresponding complex Dirac structures $L_1$, $L_2$ are smooth and constitute a GK structure on $X$.
\end{definition}

Goto has observed in \cite{Go} that for a GK manifold $(M, \mathbb{J}_1, \mathbb{J}_2)$ where $\mathbb{J}_1$ is of the form $\mathbb{J}_\beta$, each Poisson submanifold of $\beta$ is automatically a GK submanifold.

Let $F$ be an open face of codimension $k$ of $\Delta$, defined in $(\mathbb{R}^n)^*$ by
\[(u_{j_l}, \mu)=\lambda_{j_l},\quad l=1,2,\cdots, k,\]
and $V_F$ the linear subspace of $(\mathbb{R}^n)^*$ singled out by $(u_{j_l},\mu)=0$, $l=1,2,\cdots, k$. Let $c_F$ be the restriction of $c=1/2\sum_{j,k}C_{kj}e_j\wedge e_k$ on $V_F$ and $r_F$ be the rank of $c_F$. Note that these $u_{j_l}\in \mathfrak{t}$ generate a subtorus $T_{0F}$ acting trivially on $M_F$. Let $T_F$ be the quotient of $\mathbb{T}$ by $T_{0F}$.
Then we have
\begin{theorem} Let $F$ be an open face of codimension $k$ of $\Delta$ as above. Then\\
i) for points in $\mu^{-1}(F)$, the type of $\mathbb{J}_1$ is $n-r_F$;\\
 ii) $M_F$ is a GK submanifold of $M$. Precisely, its GK structure $(\mathbb{J}_{1F}, \mathbb{J}_{2F})$ belongs to $DGK_{\Omega_F}^{\mathbb{T}_F}(M_F)$, where $\Omega_F$ is the restriction of $\Omega$ on $M_F$ and the type of $\mathbb{J}_{1F}$ in $\mu^{-1}(F)\subset M_F$ is $n-k-r_F$.
\end{theorem}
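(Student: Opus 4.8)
The plan is to reduce both statements to the explicit local form (\ref{Po}) of the $J_0$-holomorphic Poisson structure $\beta$, together with Goto's observation, quoted just before the theorem, that Poisson submanifolds of $\beta$ are automatically GK submanifolds. The backbone of part i) is the standard fact that a GC structure of the form $\mathbb{J}_\beta$ on a complex $n$-fold has type $n-\textup{rk}_\mathbb{C}\beta$ at each point, and that a (global) $B$-field transform leaves the type unchanged, since it does not alter the projection of $L$ onto $T_\mathbb{C}M$. As $\mathbb{J}_1$ is a global $B$-transform of $\mathbb{J}_\beta$ (established in \S~\ref{Com}), it thus suffices to compute $\textup{rk}_\mathbb{C}\beta$ along $\mu^{-1}(F)$.

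First I would recall from the Remark after Thm.~\ref{hol} that $\beta$ is produced from the constant form $c=\frac12\sum_{j,k}C_{kj}e_j\wedge e_k\in\wedge^2\mathfrak{t}$ by replacing each $e_j$ with the $J_0$-holomorphic part $X_j^h$ of the fundamental field $X_j$; equivalently $\beta_x$ is the image of $c$ under $\wedge^2$ of the linear map $\mathfrak{t}\rightarrow T_x^{1,0}M$, $e\mapsto X_e^h$. On $\mathring{M}$ this map is injective, but at $x\in\mu^{-1}(F)$ the subtorus $T_{0F}$ generated by the $u_{j_l}$ acts trivially, so $X_{u_{j_l}}\equiv 0$ on $M_F$ and hence $X_{u_{j_l}}^h$ vanishes there. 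Thus the map factors through $\mathfrak{t}_F:=\mathfrak{t}/\langle u_{j_1},\dots,u_{j_k}\rangle$, on which it is injective since the residual $T_F$ acts freely on $\mu^{-1}(F)$. The key linear-algebra identification is that the surjection $\wedge^2\mathfrak{t}\rightarrow\wedge^2\mathfrak{t}_F$ is dual to the inclusion $\wedge^2V_F\hookrightarrow\wedge^2\mathfrak{t}^*$, because $V_F=\langle u_{j_l}\rangle^\perp=\mathfrak{t}_F^*$; consequently the image of $c$ is exactly the restricted form $c_F$, of rank $r_F$. This yields $\textup{rk}_\mathbb{C}\beta_x=r_F$ and hence type $=n-r_F$, proving i).

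For part ii) I would first invoke Goto's result: since $\mathbb{J}_1$ is a $B$-transform of $\mathbb{J}_\beta$ and $M_F=\mu^{-1}(\bar F)$ is a Poisson submanifold of $\beta$, $M_F$ is automatically a GK submanifold, so $(\mathbb{J}_{1F},\mathbb{J}_{2F})$ is a well-defined GK structure. To locate it in $DGK_{\Omega_F}^{\mathbb{T}_F}(M_F)$, I would note that $M_F$ is the $J_0$-complex toric subvariety attached to the face $\bar F$, of complex dimension $n-k$, on which $T_{0F}$ acts trivially and the residual $T_F$-action is effective and Hamiltonian with symplectic form $\Omega_F=\Omega|_{M_F}$ and moment polytope $\bar F$. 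The restriction $J_0|_{M_F}$ is then a genuine toric K\"ahler structure, and the restriction $\beta_F$ of $\beta$ is a $J_0|_{M_F}$-holomorphic Poisson structure whose defining constant matrix is precisely $C_F$; hence by Thm.~\ref{c3} the pair $(J_0|_{M_F},C_F)$ produces an element of $DGK_{\Omega_F}^{\mathbb{T}_F}(M_F)$, and matching the pulled-back Dirac structures (noting that $\mathbb{J}_{2F}$ is the restriction of the $B$-transform of $\mathbb{J}_\Omega$, hence a $B$-transform of $\mathbb{J}_{\Omega_F}$) identifies it with the induced $(\mathbb{J}_{1F},\mathbb{J}_{2F})$. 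Finally the type of $\mathbb{J}_{1F}$ on the open dense orbit $\mu^{-1}(F)\subset M_F$ is computed exactly as in i), but now inside the $(n-k)$-dimensional toric manifold $M_F$, giving $(n-k)-\textup{rk}(C_F)=n-k-r_F$.

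The main obstacle I expect is the bookkeeping in the identification step: one must check that the GK structure $\iota^*(\mathbb{J}_1,\mathbb{J}_2)$ obtained abstractly from Goto's theorem genuinely coincides, as a symplectic-type anti-diagonal structure, with the one built from the toric K\"ahler datum $(J_0|_{M_F},\Omega_F)$ and the matrix $C_F$ via Thm.~\ref{c3}. This amounts to verifying that the admissible coordinates on $\mu^{-1}(F)$ restrict correctly, that $\beta$ is tangent to $M_F$ with restricted matrix $c_F$, and that the global $B$-field $b_1$ of Thm.~\ref{hol} pulls back to the one associated to $C_F$. The rank computations of i) and the product description of $\mathcal{DGK}$ make this essentially a matter of tracking the block matrices, but the degeneration of the frame $\{X_j,Y_j\}$ along $M_F$, where $T_{0F}$ acts trivially, is what requires the most care.
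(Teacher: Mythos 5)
Your proposal is correct and follows essentially the same route as the paper's (explicitly sketched) proof: Goto's observation that toric submanifolds are Poisson submanifolds of $\beta$ and hence GK submanifolds, restriction of $\beta$ to $M_F$ yielding $\beta_F$ corresponding to $c_F$, and the type count $n-\textup{rk}\,\beta$ coming from the vanishing of the $T_{0F}$-fundamental fields along $M_F$. The only cosmetic differences are that you carry out the rank identification intrinsically via the surjection $\wedge^2\mathfrak{t}\rightarrow\wedge^2\mathfrak{t}_F$ dual to $\wedge^2V_F\hookrightarrow\wedge^2\mathfrak{t}^*$ (where the paper simply chooses a basis with $e_1,\dots,e_k$ in the Lie algebra of $T_{0F}$), and that you place the restricted structure in $DGK_{\Omega_F}^{\mathbb{T}_F}(M_F)$ by matching against Thm.~\ref{c3}, where the paper appeals directly to $\mathbb{T}_F$-invariance and the strong Hamiltonian criterion of Thm.~\ref{cha}.
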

\begin{proof}Since the theorem is almost obvious, we only sketch a proof here. The pull-back of $L_2$ to a submanifold is a GC structure if and only if this submanifold is a symplectic submanifold and if this is the case, the pull-back of $L_2$ is again of symplectic type and the symplectic structure is precisely the pull-back of $\Omega$ (see \cite{Go}). Thus on $M_F$, $\mathbb{J}_{2F}$ is of symplectic type whose symplectic structure is $\Omega_F$, which is a toric symplectic structure (this is only a restatement in the GK setting of the classical result for toric symplectic manifolds).

Let $X_j^h$ be the $J_0$-holomorphic part of $X_j$ and $J_{0F}$ the restriction of $J_0$ on $M_F$. Then the $J_0$-holomorphic Poisson structure $\beta=1/2\sum_{j,l}C_{lj}X_j^h\wedge X_l^h$ and its restriction on $M_F$ is
\[\beta_F:=\frac{1}{2}\sum_{j,l}C_{lj}X_j^h|_F\wedge X_l^h|_F,\]
where $X_j^h|_F$ is the restriction of $X_j^h$ on $M_F$. Then $\mathbb{J}_{1F}$ is a B-transform of $\mathbb{J}_{\beta_F}$. Obviously all these structures on $M_F$ are $\mathbb{T}_F$-invariant and the $\mathbb{T}_F$-action is strong Hamiltonian. Thus $(\mathbb{J}_{1F}, \mathbb{J}_{2F})$ is really an element in $DGK_{\Omega_F}^{\mathbb{T}_F}(M_F)$.

We can choose the basis $\{e_j\}$ of $\mathfrak{t}$ such that $e_1,\cdots, e_k$ lie in the Lie algebra of $T_{0F}$. Then
\[\beta_F=\frac{1}{2}\sum_{j,l=k+1}^nC_{lj}X_j^h|_F\wedge X_l^h|_F,\]
which precisely corresponds to $c_F$ in the manner we have described at the end of \S~\ref{loc}. The remainder of this theorem follows directly from this observation.
\end{proof}
\subsection{An explicit example on $\mathbb{C}P^1\times \mathbb{C}P^1$}\label{ex}In this subsection, we construct a toric GK structure explicitly on $M=\mathbb{C}P^1\times \mathbb{C}P^1$ in the spirit of Thm.~\ref{c3}.
Let $M$ be equipped with the symplectic structure
\[\Omega=\frac{\sqrt{-1}}{2}\frac{dz_1\wedge d\bar{z}_1}{(1+|z_1|^2)^2}+\frac{\sqrt{-1}}{2}\frac{dz_2\wedge d\bar{z}_2}{(1+|z_2|^2)^2},\]
which is Hamiltonian relative to the standard $\mathbb{T}^2$-action:
\[(e^{\sqrt{-1}\theta_1},e^{\sqrt{-1}\theta_2})\cdot ([1:z_1], [1:z_2])=([1: e^{\sqrt{-1}\theta_1}z_1], [1:e^{\sqrt{-1}\theta_2}z_2)].\]
The infinitesimal action is given by
\[\partial_{\theta_j}=\sqrt{-1}(z_j\partial_{z_j}-\bar{z}_j\partial_{\bar{z}_j}),\quad j=1,2,\]
and the moment map for this toric symplectic manifold is chosen to be
\[\mu_j=\frac{|z_j|^2}{2(1+|z_j|^2)},\quad j=1,2.\]
Due to Guillemin's formula \cite{Gul} (see Eq.~(\ref{formu})), the symplectic potential of the standard toric K$\ddot{a}$hler structure in this case is
\[\tau=\frac{1}{2}\sum_{j=1}^2[\mu_j\ln \mu_j+(\frac{1}{2}-\mu_j)\ln(\frac{1}{2}-\mu_j)],\]
whose Hessian $\psi$ is
\[\left(
    \begin{array}{cc}
      \frac{1}{4\mu_1(1/2-\mu_1)} & 0 \\
      0 & \frac{1}{4\mu_2(1/2-\mu_2)} \\
    \end{array}
  \right).
\]

We would like to describe explicitly the many underlying geometric structures associated to an element of $DGK_\Omega^{\mathbb{T}^2}(M)$ whose related matrix $\phi$ is

\[\phi=\psi+C=\left(
    \begin{array}{cc}
      \frac{1}{4\mu_1(1/2-\mu_1)} & c \\
      -c & \frac{1}{4\mu_2(1/2-\mu_2)} \\
    \end{array}
  \right),\]
  where $c\neq 0$ is a real number. Note that
  \[\phi^{-1}=\frac{1}{\det \phi}\left(
                                   \begin{array}{cc}
                                     \frac{1}{4\mu_2(1/2-\mu_2)} & -c \\
                                     c &  \frac{1}{4\mu_1(1/2-\mu_1)} \\
                                   \end{array}
                                 \right),
  \]
  where $\det \phi=\frac{1}{16\mu_1(1/2-\mu_1)\mu_2(1/2-\mu_2)}+c^2$. For convenience, we introduce some notation:
  \[p:=\frac{1}{16\mu_1(1/2-\mu_1)\mu_2(1/2-\mu_2)},\quad \varrho_j=dz_j/z_j,\quad j=1,2.\]

   Obviously, in the present setting $\mathbb{J}_1$ should also be a B-transform of a symplectic structure $Q$ by a 2-form $b'$, at least on $\mathring{M}$. $Q$ should be the inverse of $\beta_1$ and just as how we obtain $b$ we have $b'=-\frac{1}{2}Q(J_++J_-)$. Then in terms of the admissible coordinates $\theta_j, \mu_j$, we can write down the several structures explicitly:
  \[g=\frac{4p}{p+c^2}\sum_{j=1}^2\mu_j(1/2-\mu_j)(d\theta_j)^2+\sum_{j=1}^2\frac{(d\mu_j)^2}{4\mu_j(1/2-\mu_j)},\]
    \[b=cd\mu_1\wedge d\mu_2-\frac{c}{p+c^2}d\theta_1\wedge d\theta_2,\]
    \[Q=\frac{1}{4c\mu_2(1/2-\mu_2)}d\theta_1\wedge d\mu_2-\frac{1}{4c\mu_1(1/2-\mu_1)}d\theta_2\wedge d\mu_1,\]
    and
    \[b'=\frac{p}{c(p+c^2)}d\theta_1\wedge d\theta_2-\frac{p}{c}d\mu_1\wedge d\mu_2.\]

Note that the pure spinors\footnote{In the paper, we haven't reviewed the pure spinor description of GC structures. For this see \cite{Gu0}.} of $\mathbb{J}_1$ and $\mathbb{J}_2$ are now clear. They are $e^{b'-\sqrt{-1}Q}$ and $e^{b-\sqrt{-1}\Omega}$. To compare with the standard K$\ddot{a}$hler structure on $\mathbb{C}P^1\times \mathbb{C}P^1$, let us consider the 2-form $\exp(b'-b-\sqrt{-1}Q)$ (A B-transform $e^b$ is used to modify $\mathbb{J}_2$ into $\mathbb{J}_\Omega$). Basically, we have
\[d\theta_j=-\frac{\sqrt{-1}}{2}(\varrho_j-\bar{\varrho}_j),\quad d\mu_j=\frac{|z_j|^2}{2(1+|z_j|^2)^2}(\varrho_j+\bar{\varrho}_j).\]
Substituting these into $b'-b-\sqrt{-1}Q$, we finally get
\[b'-b+\frac{c}{4p}(\varrho_1+\bar{\varrho}_1)\wedge(\varrho_2+\bar{\varrho}_2)-\sqrt{-1}Q=-\frac{\varrho_1\wedge \varrho_2}{c}. \]
Note that $\frac{c}{4p}(\varrho_1+\bar{\varrho}_1)\wedge(\varrho_2+\bar{\varrho}_2)$ is a real and global 2-form on $\mathbb{C}P^1\times \mathbb{C}P^1$ and the underlying holomorphic Poisson structure $\beta$ is $-cz_1z_2\partial_{z_1}\wedge \partial_{z_2}$, the inverse of $\frac{\varrho_1\wedge \varrho_2}{c}$. The type-jumping locus of $\mathbb{J}_1$ consists of four lines:
\[\{[1:0]\}\times \mathbb{C}P^1,\quad \{[0:1]\}\times \mathbb{C}P^1,\quad \mathbb{C}P^1\times\{[1:0]\},\quad \mathbb{C}P^1\times\{[0:1]\}.\]
Away from the four fixed points, the type-jumping locus is non-degenerate in the sense of Cavalcanti and Gualtieri in \cite{CG}.
 \section{Generalized Delzant construction}\label{GDC}
To construct a nontrivial GK structure in the manner of Thm.~\ref{c3}, one should first choose a background toric K$\ddot{a}$hler structure on $M$. For a fixed moment polytope $\Delta$, there is a standard choice called Delzant construction \cite{Del} \cite{Gul}. It is a concrete application of symplectic reduction (or more specifically K$\ddot{a}$hler reduction) to a torus action on $\mathbb{C}^d$. Let $(M_{\Delta}, \Omega, \mathbb{T}^n, \mu, J_0)$ be the toric K$\ddot{a}$hler manifold associated to $\Delta$ in Delzant's construction. If we choose this "standard" K$\ddot{a}$hler structure as the reference K$\ddot{a}$hler structure in Thm.~\ref{c3}, then inputting an additional anti-symmetric matrix $C$ will produce a toric GK structure on $M_\Delta$. This raises the natural question that\emph{ whether such a toric GK structure is the result of GK reduction of a toric GK structure on $\mathbb{C}^d$, just as $M_\Delta$ is the K$\ddot{a}$hler reduction of the standard K$\ddot{a}$hler structure on $\mathbb{C}^d$}. In this section, we mainly prove that this is really the case.
\subsection{The general mechanism}\label{gde}Given a Delzant polytope \footnote{For the definition of a Delzant polytope, see \cite{Gul}.} $\Delta$ in $(\mathbb{R}^n)^*$ defined by
\[l_j(x):=(u_j, x)\geq \lambda_j,\quad j=1,2,\cdots, d,\]
as found by Guillemin in \cite{Gul}, the standard symplectic potential
\begin{equation}\tau(x)=\frac{1}{2}\sum_{j=1}^dl_j(x)\ln l_j(x)\label{formu}\end{equation}
on $\mathring{\Delta}$ determines the standard K$\ddot{a}$hler structure on $M_\Delta$ in Delzant's construction. Now if an additional $n\times n$ anti-symmetric constant matrix $C$ is prescribed, then the pair $(\tau, C)$ generates a toric GK structure on $M_\Delta$ in the manner described in Thm.~\ref{c3}. We now prove that this slightly "twisted" version of the standard K$\ddot{a}$hler structure on $M_\Delta$ can be interpreted as reduced from a toric GK structure on $\mathbb{C}^d$.

Let us first describe the standard K$\ddot{a}$hler structure on $\mathbb{C}^d$ in the spirit of Abreu-Guillemin theory. $\mathbb{C}^d$ is equipped with the standard symplectic form
\[\Omega_0=\frac{\sqrt{-1}}{2}\sum_{j=1}^ddz_j\wedge d\bar{z}_j.\]
and the also standard action of a $d$-dimensional torus $\mathbb{T}^d$:
\[(e^{\sqrt{-1}\theta_1},\cdots,e^{\sqrt{-1}\theta_d})\cdot (z_1,\cdots, z_d)=(e^{\sqrt{-1}\theta_1}z_1,\cdots,e^{\sqrt{-1}\theta_d}z_d).\]
The infinitesimal action is generated by
\[\partial_{\theta_j}=\sqrt{-1}(z_j\partial_{z_j}-\bar{z}_j\partial_{\bar{z}_j}),\quad j=1,2,\cdots, d.\]
This action is Hamiltonian with a moment map $\nu: \mathbb{C}^d\rightarrow (\mathbb{R}^d)^*$, i.e.,
\[\nu(z_1,\cdots, z_d)=\frac{1}{2}(|z_1|^2+2\lambda_1, |z_2|^2+2\lambda_2,\cdots, |z_d|^2+2\lambda_d),\]
or
\[\nu_j=\frac{1}{2}|z_j|^2+\lambda_j,\quad j=1,2,\cdots, d.\]
In terms of admissible coordinates $\theta, \nu$, the metric on $\mathbb{C}^d$ is of the following form:
\[g_0=\sum_{j=1}^d(|z_j|^2(d\theta_j)^2+\frac{(d\nu_j)^2}{|z_j|^2}).\]
Thus the canonical K$\ddot{a}$hler structure is described by the diagonal matrix
\[\psi_0=\textup{Diag}\{1/|z_1|^2, 1/|z_2|^2,\cdots, 1/|z_d|^2\}.\]
and the corresponding symplectic potential is
\[\tau_0=\frac{1}{2}\sum_{j=1}^d(\nu_j-\lambda_j)\ln(\nu_j-\lambda_j).\]

To see how the anti-symmetric $n\times n$ matrix $C$ is related to structures on $\mathbb{C}^d$, note that in Delzant's construction we have the linear map $\varsigma: \mathbb{R}^d\rightarrow \mathbb{R}^n$, $e_j\mapsto u_j$, where $\{e_j\}$ is the standard basis of $\mathbb{R}^d$. Let $\mathfrak{n}$ be the kernel of $\varsigma$. Then we have the short exact sequence
\begin{equation}0\longrightarrow \mathfrak{n}\stackrel{\iota}\longrightarrow \mathbb{R}^d \stackrel{\varsigma}\longrightarrow \mathbb{R}^n \longrightarrow0,\label{seq}\end{equation}
where each term should be interpreted as the Lie algebra of the corresponding torus and $\iota$ is the natural inclusion map. We have an induced map $\varsigma_\wedge: \wedge^2 \mathbb{R}^d\rightarrow \wedge^2 \mathbb{R}^n$. Intrinsically understood, $C$ is a skew-symmetric bilinear function on $(\mathbb{R}^n)^*$ in which the moment map $\mu$ on $M_\Delta$ takes values. This means $C$ lives in \[\wedge^2 [(\mathbb{R}^n)^*]^*\cong \wedge^2 \mathbb{R}^n\cong \wedge^2 \mathfrak{t}^n.\] Let $C_0\in \wedge^2 \mathbb{R}^d$ such that $\varsigma_\wedge(C_0)=C$. Then $C_0$ is a skew-symmetric bilinear function on $(\mathbb{R}^d)^*$ in which the moment map $\nu$ on $\mathbb{C}^d$ takes values or $C_0\in \wedge^2 \mathfrak{t}^d$.

To summarize, we have
\begin{lemma}The pair $(\tau_0, C_0)$ determines a toric GK structure $\mathbb{J}_{10}\in DGK_{\Omega_0}^{\mathbb{T}^d}(\mathbb{C}^d)$ in the manner of Thm.~\ref{c3}.
\end{lemma}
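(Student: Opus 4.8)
The plan is to run the argument of Thm.~\ref{c3} with $\mathbb{C}^d$ in place of the compact manifold $M$, taking as reference toric K\"ahler structure the standard one on $\mathbb{C}^d$, whose symplectic potential is $\tau_0$ and whose Hessian is $\psi_0=\textup{Diag}\{1/|z_1|^2,\cdots,1/|z_d|^2\}$. First I would set $\phi=\psi_0+C_0$ and use it to define the anti-diagonal matrix of Thm.~\ref{GK} (with $F_\zeta=0$) on the open dense orbit $(\mathbb{C}^*)^d$. Since $\psi_0$ is the Hessian of the strictly convex $\tau_0$ and $C_0$ is a constant anti-symmetric matrix, Thm.~\ref{GK} immediately yields a genuine element $\mathbb{J}_{10}\in DGK_{\Omega_0}^{\mathbb{T}^d}((\mathbb{C}^*)^d)$ on this orbit.

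The real content is the compactification, namely extending $\mathbb{J}_{10}$ smoothly across the coordinate faces $\{z_j=0\}$ to all of $\mathbb{C}^d$. Because the reference structure is honestly K\"ahler, I would invoke Corollary~\ref{c2}: it suffices to check that the two tensor combinations appearing there extend smoothly to the closed moment polyhedron. As in the proof of Thm.~\ref{c3}, the nontrivial ones collapse to
\[\psi_0(\psi_0+C_0)^{-1}\psi_0-\psi_0=-C_0(\textup{I}+\psi_0^{-1}C_0)^{-1},\quad (\psi_0-C_0)\psi_0^{-1}(\psi_0+C_0)-\psi_0=-C_0\psi_0^{-1}C_0,\]
so everything reduces to smoothness of $\psi_0^{-1}$ and invertibility of $\textup{I}+\psi_0^{-1}C_0$. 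For the former I would note $\psi_0^{-1}=\textup{Diag}\{|z_1|^2,\cdots,|z_d|^2\}$ with $|z_j|^2=2(\nu_j-\lambda_j)$, a globally smooth $\mathbb{T}^d$-invariant function, so $\psi_0^{-1}$ extends smoothly (degenerating along the faces). For the latter I would rerun the square-root trick: on $(\mathbb{C}^*)^d$ the matrix $\psi_0^{-1}$ is positive-definite, hence $\det(\textup{I}+\psi_0^{-1}C_0)=\det(\textup{I}+\psi_0^{-1/2}C_0\psi_0^{-1/2})\geq 1$ by anti-symmetry of $\psi_0^{-1/2}C_0\psi_0^{-1/2}$, and this lower bound persists on the faces by continuity.

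The only genuine obstacle is one of scope rather than substance: Corollary~\ref{c2} and the whole compactification apparatus of \S~\ref{Com} were formulated for a \emph{compact} toric symplectic manifold, whereas $\mathbb{C}^d$ is noncompact. I would dispatch this by observing that the smoothness criterion is entirely local---it only asks for smooth extension of the listed tensors across boundary faces of the moment image---so no step in its derivation uses global compactness, and the local models near the facets $\{z_j=0\}$ of $\mathbb{C}^d$ are of precisely the same kind as those near the facets of a Delzant polytope. Once smoothness and invertibility are in hand, Lemma~\ref{c1} promotes $\mathbb{J}_{10}$ from the open orbit to all of $\mathbb{C}^d$, placing it in $DGK_{\Omega_0}^{\mathbb{T}^d}(\mathbb{C}^d)$ as claimed.
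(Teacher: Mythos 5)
Your proposal is correct and takes essentially the same approach as the paper: the paper's proof simply notes that $(\tau_0, C_0)$ defines an element of $DGK_{\Omega_0}^{\mathbb{T}^d}((\mathbb{C}^*)^d)$ and then says the extension to all of $\mathbb{C}^d$ goes ``along the same line'' as the proof of Thm.~\ref{c3}, omitting the details. You have supplied exactly those omitted details faithfully---the reduction via Corollary~\ref{c2} to $-C_0(\textup{I}+\psi_0^{-1}C_0)^{-1}$ and $-C_0\psi_0^{-1}C_0$, the smoothness of $\psi_0^{-1}=\textup{Diag}\{|z_1|^2,\cdots,|z_d|^2\}$ (here even linear in $\nu$, so no analogue of the global argument for $\psi^{-1}$ in Thm.~\ref{c3} is needed), the determinant bound $\det(\textup{I}+\psi_0^{-1}C_0)\geq 1$ via the square-root trick, and the correct observation that the compactification criteria of \S~\ref{Com} are local, so the noncompactness of $\mathbb{C}^d$ causes no difficulty.
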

\begin{proof}
The pair $(\tau_0, C_0)$ certainly defines a toric GK structure in $DGK_{\Omega_0}^{\mathbb{T}^d}((\mathbb{C}^*)^d)$. To see it extends smoothly on the whole of $\mathbb{C}^d$. We can go along the same line in the proof of Thm.~\ref{c3}, and thus omit the details.
\end{proof}
 It is of some value to have a close look at the GK structure on $\mathbb{C}^d$ before going on. In the present context, the average complex structure $I_0$ is actually the standard one on $\mathbb{C}^d$ and thus the $I_0$-holomorphic action of $\mathbb{T}^d$ can be complexified canonically. Even more, this $\mathbb{T}^d$-action can be generalized complexified: Although in \cite[Thm.~5.7]{Wang}, the author only proved that a strong Hamiltonian action on a compact GK manifold can be generalized complexified, this still holds for our present non-compact situation because we already know what the underlying $\mathbb{T}^d_\mathbb{C}$-action on $\mathbb{C}^d$ is and the proof of \cite[Thm.~5.7]{Wang} continues to be valid without any essential modification. Additionally, if $C_0=1/2\sum_{k,j=1}^d C_{0kj}e_j\wedge e_k $, then the underlying $I_0$-holomorphic Poisson structure $\beta_0$ is
 \[\beta_0=-\frac{1}{2}\sum_{k,j=1}^d C_{0kj}z_jz_k\partial_{z_j}\wedge \partial_{z_k},\]
 which is quadratic.

Note that the short exact sequence (\ref{seq}) of Lie algebras lifts to the level of Lie groups:
\begin{equation}0\longrightarrow N \longrightarrow \mathbb{T}^d \longrightarrow \mathbb{T}^n \longrightarrow0.\label{seq2}\end{equation}
\begin{theorem}The toric GK structure on $M_\Delta$ associated to $(\tau, C)$ is the GK reduction of the toric GK structure on $\mathbb{C}^d$ associated to $(\tau_0, C_0)$ by the strong Hamiltonian action of $N$.
\end{theorem}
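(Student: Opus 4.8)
The plan is to exploit the classification we have just established: by Thm.~\ref{hol} together with the concluding structure theorem for $DGK_\Omega^\mathbb{T}(M)$, an anti-diagonal toric GK structure of symplectic type on $M_\Delta$ is pinned down (up to $\mathbb{T}$-equivariant symplectomorphism) by three pieces of data, namely the underlying symplectic form, the average complex structure $J_0$ (equivalently its symplectic potential), and the anti-symmetric matrix governing the holomorphic Poisson structure. So it suffices to show that the GK reduction of $\mathbb{J}_{10}$ by $N$ is smooth, of symplectic type, and carries exactly the data $(\Omega,\tau,C)$. First I would set up the reduction in the sense of \S\ref{str}: the $N$-action is the restriction of the strong Hamiltonian $\mathbb{T}^d$-action, hence itself strong Hamiltonian, with moment map the $\iota^*$-projection of $\nu$; the Delzant condition on $\Delta$ guarantees that $N$ acts freely on the relevant level set and that $\mathbb{T}^n=\mathbb{T}^d/N$ acts effectively on the quotient. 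One then reduces the complex Dirac structures $L_1,L_2$, the required constant-rank condition being available because $\mathbb{J}_{10}$ is \emph{globally} a B-transform of $\mathbb{J}_{\beta_0}$ and $N$ acts freely.

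Next I would handle the symplectic and complex content separately. Reducing $\mathbb{J}_{20}$—a B-transform of $\mathbb{J}_{\Omega_0}$—by the Hamiltonian $N$-action is, modulo the bookkeeping B-field, precisely the classical Marsden--Weinstein reduction of $(\mathbb{C}^d,\Omega_0)$, i.e. Delzant's construction, and therefore returns $(M_\Delta,\Omega)$; this certifies that the quotient lies in $GK_\Omega^{\mathbb{T}^n}(M_\Delta)$. For the average complex structure, recall from the discussion preceding Thm.~\ref{coi} and the corollary identifying the GC $\mathbb{T}^d_\mathbb{C}$-action with the ordinary $I_0$-complexification that the average complex structure of $\mathbb{J}_{10}$ is the standard $I_0$ on $\mathbb{C}^d$. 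Since $J_0$ is the restriction of $-\mathbb{J}_1$ to $\mathbb{K}\oplus\mathbb{J}_1\mathbb{K}$, passage to the average complex structure commutes with quotienting by the GC $N_\mathbb{C}$-action; hence the reduced average complex structure is the GIT/K\"ahler quotient of $(\mathbb{C}^d,I_0,\Omega_0)$ by $N$, which is the standard Delzant complex structure on $M_\Delta$, and correspondingly $\tau_0$ descends to Guillemin's $\tau$ of Eq.~(\ref{formu}). This fixes the $\tau$-invariant.

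The crux is the holomorphic Poisson part. As $\beta_0$ is $\mathbb{T}^d_\mathbb{C}$-invariant it is $N_\mathbb{C}$-invariant and descends to a $J_0$-holomorphic Poisson structure on $M_\Delta=\mathbb{C}^d/\!/N_\mathbb{C}$. Using the description from the Remark after Thm.~\ref{hol}, $\beta_0$ is obtained from $c_0=\tfrac12\sum C_{0kj}\,e_j\wedge e_k$ by replacing each $e_j$ with the $J_0$-holomorphic part $X_j^h$ of the corresponding generator. Under the quotient map the holomorphic generators push forward along $\varsigma_\mathbb{C}$, so the descended bivector is the one built from $\varsigma_\wedge(c_0)=c$, i.e. from $C$. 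Thus the reduced holomorphic Poisson structure is exactly the $\beta$ attached to $(\tau,C)$, and the reduced $\mathbb{J}_1$ is the B-transform of $\mathbb{J}_\beta$ dictated by Thm.~\ref{hol}.

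Assembling the three computations, the reduced structure is anti-diagonal of symplectic type with data $(\Omega,\tau,C)$, which by the uniqueness in our classification is precisely the $(\tau,C)$-structure on $M_\Delta$. The hard part will be this last step: making the holomorphic Poisson reduction fully rigorous—verifying that the GC $N_\mathbb{C}$-quotient realizing $\beta$ is genuinely compatible with the Dirac reduction of $\mathbb{J}_{10}$, and that the global B-field $b_1$ on $\mathbb{C}^d$ descends to the prescribed B-field on $M_\Delta$—together with confirming the smoothness and constant-rank hypotheses that legitimize the reduction of both $L_1$ and $L_2$.
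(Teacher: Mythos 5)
Your overall route is the paper's own: view the quotient through the GIT picture of $\mathbb{J}_{10}|_{N_\mathbb{C}Z}$, identify the reduced average complex structure with the standard Delzant/Guillemin one via Kempf--Ness, push $\beta_0$ forward along the quotient map so that $C_0$ becomes $\varsigma_\wedge(C_0)=C$, and then conclude by the uniqueness part of the classification. Steps one, three and four of your outline match the paper's Steps 1, 3 and 4 almost verbatim.

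However, there is a genuine gap at the point corresponding to the paper's Step 2, and it is not among the ``hard parts'' you flag at the end. Your symplectic argument certifies only that the reduced structure lies in $GK_\Omega^{\mathbb{T}^n}(M_\Delta)$, but everything after that presupposes membership in $DGK_\Omega^{\mathbb{T}^n}(M_\Delta)$: the \emph{average complex structure} $J_0$ is only defined for anti-diagonal structures (equivalently, by Thm.~\ref{cha}, when the residual $\mathbb{T}^n$-action is strong Hamiltonian -- note that $J_0$ is \emph{not} recoverable from $J_\pm$ by a universal formula such as $-\bigl(\tfrac{J_++J_-}{2}\bigr)^{-1}$; in admissible coordinates that inverse has lower-left block $[(\phi^{-1})_s]^{-1}\neq\phi_s$ unless $\phi_a=0$), and the classification by pairs $(\tau,C)$ that your uniqueness step invokes is likewise a classification of $DGK$, not of $GK$. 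Strong Hamiltonianness of $N$ on $\mathbb{C}^d$ (which you do note) does not by itself transfer to the residual $\mathbb{T}^n$-action on the quotient; your assertion that ``passage to the average complex structure commutes with quotienting by the GC $N_\mathbb{C}$-action'' is therefore circular as stated, since the complexified $\mathbb{T}^n$-action downstairs only exists once strong Hamiltonianness is known. The paper closes this gap by a short direct computation using naturality of Dirac reduction: for a basis $f_j\in\mathfrak{t}^n$ with extended generators $X_j+\xi_j$ on $N_\mathbb{C}Z$,
\begin{equation*}
\beta_1(d\mu_j,d\mu_k)=\bigl(\mathbb{J}_1[X_j+\xi_j],[X_k+\xi_k]\bigr)=\bigl(\mathbb{J}_{10}(X_j+\xi_j),X_k+\xi_k\bigr)=0,
\end{equation*}
the last equality because the full $\mathbb{T}^d$-action upstairs is strong Hamiltonian. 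Inserting this computation between your first and second paragraphs legitimizes the average complex structure, the descent of $\beta_0$, and the final appeal to the $(\tau,C)$-classification; with it, your argument becomes essentially the paper's proof. Your remaining worries (constant rank for the Dirac reductions, descent of the global B-field $b_1$) are comparatively routine, and the paper handles the B-field implicitly since, once anti-diagonality, $J_0$ and $\beta$ are pinned down, $\phi=\phi_s+C$ determines $J_+$ and hence the whole GK pair.
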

 \begin{proof}
 First we should notice that the present situation does fit in well with the general formalism developed in \cite{LT} \cite{Wang}. So we do have a GK quotient by the action of $N$. Additionally, the story is rather trivial from the symplectic side--it is essentially the classical symplectic reduction. The point now is to show the quotient GK structure does lie in $DGK_\Omega^{\mathbb{T}^n}(M_\Delta)$ and is parameterized by the pair $(\tau, C)$. We divide the proof into several steps:

 \emph{Step 1.} The reduced GK structure $\mathbb{J}_1\in GK_\Omega^{\mathbb{T}^n}(M_\Delta)$.

 One can choose a splitting of the short exact sequence (\ref{seq2}) such that $\mathbb{T}^d=N\times \mathbb{T}^n$. This is always possible, e.g. one can choose the stabilizer of $x\in Z:=\nu^{-1}(0)$ such that $\mu([x])$ is a vertex of $\Delta$ where $[x]$ is the image of $x$ in the quotient $Z/N$. The action of $\mathbb{T}^n$ on $\mathbb{C}^d$ commutes with that of $N$ and the residual $\mathbb{T}^n$ action on $M_\Delta$ should preserve the many reduced structures. The reduced GK structure is obviously of symplectic type and acquires the moment map $\mu$ in the usual way. This shows $\mathbb{J}_1\in GK_\Omega^{\mathbb{T}^n}(M_\Delta)$.

 \emph{Step 2.} The reduced GK structure $\mathbb{J}_1\in DGK_\Omega^{\mathbb{T}^n}(M_\Delta)$.

 According to Thm.~\ref{cha}, we only need to show the residual $\mathbb{T}^n$-action is strong Hamiltonian. We should change our viewpoint slightly to look at the GIT quotient of $\mathbb{J}_{10}$. The open set $N_\mathbb{C}Z\subset \mathbb{C}^d$ inherits its natural GC structure $\mathbb{J}_{10}|_{N_\mathbb{C}Z}$ by restriction and $N_\mathbb{C}$ acts on it in a GC fashion. Then $\mathbb{J}_1$ on $M_\Delta$ is precisely the reduction of $\mathbb{J}_{10}|_{N_\mathbb{C}Z}$ by the GC $N_\mathbb{C}$-action and on $M_\Delta$
 \[\beta_1(d\mu_j, d\mu_k)=(\mathbb{J}_1[X_j+\xi_j], [X_k+\xi_k])=(\mathbb{J}_{10}(X_j+\xi_j), X_k+\xi_k)=0,\]
 where $\{X_j+\xi_j\}$ denotes the infinitesimal extended action on $N_\mathbb{C}Z$ of a basis $f_j\in \mathfrak{t}^n$ and $[X_j+\xi_j]$ its image under the quotient map. The last equality is due to the fact that the action of $\mathbb{T}^d$ on $\mathbb{C}^d$ is strong Hamiltonian. This proves that the residual $\mathbb{T}^n$-action on $M_\Delta$ is strong Hamiltonian.

 \emph{Step 3}. The average complex structure $J_0$ associated to $\mathbb{J}_1$ is parameterized by $\tau$.

 This is fairly easy if we again pay attention to the GIT quotient of $\mathbb{J}_{10}$. Note that $J_0$ restricted on $\mathring{M}_\Delta$ is actually obtained from the GC action of $\mathbb{T}^n_\mathbb{C}$ while this action stems from the quotient action of $\mathbb{T}^d_\mathbb{C}$ by $N_\mathbb{C}$. We have noticed that the GC action of $\mathbb{T}^d_\mathbb{C}$ on $\mathbb{C}^d$ covers the standard $\mathbb{T}_\mathbb{C}^d$-action. That's to say $J_0$ restricted on $\mathring{M}_\Delta$ is essentially the standard complex structure on $\mathbb{T}^d_\mathbb{C}/N_\mathbb{C}$, which is clearly, due to the famous Kempf-Ness theorem of classical GIT, the one determined by $\tau$ from the symplectic side. 

  \emph{Step 4}. The underlying $J_0$-holomorphic Poisson structure $\beta$ associated to $\mathbb{J}_1$ is parameterized by $C$.

  Note that $\beta$ can be extracted from the formula \[\beta(df ,dg)=-\frac{\sqrt{-1}}{2}\beta_1(df, dg)=-\frac{\sqrt{-1}}{2}(\mathbb{J}_1 df, dg),\] where $f, g$ are local $J_0$-holomorphic functions on $M_\Delta$. Let $q$ be the quotient map from $N_\mathbb{C}Z$ to $M_\Delta$. Then $q^*(f), q^*(g)$ are $I_0$-holomorphic functions on $N_\mathbb{C}Z$ and
  \[(\mathbb{J}_1 df, dg)=(\mathbb{J}_{10}d q^*(f), dq^*(g))=2\sqrt{-1}\beta_0(dq^*(f), dq^*(g)).\]
  That's to say $\beta=q_*(\beta_0)$. This implies that $\beta$ is precisely the $J_0$-holomorphic Poisson structure corresponding to $C$.
  \end{proof}
  The above theorem has an interesting implication. Usually, the K$\ddot{a}$hler structure on $M_\Delta$ obtained via Delzant's construction is regarded as canonical, but from the viewpoint of our theorem this is however not that canonical. If one wants to obtain only the toric symplectic structure on $M_\Delta$, there is still much freedom to decide what to start with. Even if one wants to obtain the canonical K$\ddot{a}$hler structure on $M_\Delta$, he can still use a nontrivial toric GK structure on $\mathbb{C}^d$ from the very beginning, simply requiring that $C_0$ lie in the kernel of $\varsigma_\wedge$.

\begin{example}We can use the generalized Delzant construction to produce a GK structure on $\mathbb{C}P^2$. In the present situation $d=3$ and $\mathbb{T}^3$ acts on $\mathbb{C}^3$ in the standard fashion. The action of $N=S^1$ is simply by scaling.
We choose
\[C_0=\left(
    \begin{array}{ccc}
      0 & c_1 & c_2 \\
      -c_1 & 0 & c_3 \\
      -c_2 & -c_3 & 0 \\
    \end{array}
  \right),
\]
or
\[C_0=c_1e_1\wedge e_2+c_2e_1\wedge e_3+c_3e_2\wedge e_3.\]
Note that in this case, $\varsigma(e_1)=(1,0)$, $\varsigma(e_2)=(0,1)$, $\varsigma(e_3)=(-1,-1)$ and thus $\varsigma(e_1+e_2+e_3)=0$. Therefore,
\[\varsigma_\wedge(C_0)=(c_1-c_2+c_3)\varsigma(e_1)\wedge\varsigma(e_2).\]
It is this combination $c_1-c_2+c_3$ that will finally matter for the reduced structure on $\mathbb{C}P^2$. In particular, if $c_1+c_3=c_2$ while $c_1, c_2, c_3$ are not all zero, the reduced structure is the standard K$\ddot{a}$hler structure on $\mathbb{C}P^2$, although the initial structure on $\mathbb{C}^3$ is not its standard one.
\end{example}
In \cite{LT}, a similar reduction procedure was applied to $\mathbb{C}^d$ to get nontrivial GK structures of symplectic type. The starting point was to fix $\Omega_0$ and meanwhile deform the standard complex structure on $\mathbb{C}^d$ in a compatible way. GK reduction then assures that the quotient be a nontrivial GK structure of symplectic type. The generalized Delzant construction introduced above seems conceptually much easier. However, it can not be viewed as a practical method to produce toric GK structures for all such structures on $M_\Delta$ can be directly obtained by using Guillemin's formula (\ref{formu}) for the symplectic potential and an additional anti-symmetric matrix. Generalized Delzant construction can really be used for other purposes: In the next subsection, it will be used to construct non-abelian examples of strong Hamiltonian actions. Another possible use was suggested by \cite{Wang}: $N_\mathbb{C}Z$ in the construction is a generalized holomorphic principal $N_\mathbb{C}$-bundle, and it can be used to produce $\mathbb{J}_1$-generalized holomorphic vector bundles (actually holomorphic Poisson modules in this case, see \cite{Gu2}) on $M_\Delta$. We will investigate this possible use elsewhere.

\subsection{Examples of non-abelian strong Hamiltonian action}\label{non}
Up to now, we have only been dealing with strong Hamiltonian actions of tori. Actually, in \cite{Wang} only one example of non-abelian strong Hamiltonian action was given. In this subsection, we construct more examples by using generalized Delzant construction introduced in \S~\ref{gde}. The final result, more or less, is similar to the example provided in \cite{Wang}, but conceptually simpler and more practical.

The idea goes as follows: Starting with an $(n+1)\times (n+1)$ anti-symmetric matrix $C$, by generalized Delzant construction we can construct a toric GK structure $(\mathbb{J}_1, \mathbb{J}_2)$ of symplectic type on $\mathbb{C}P^n$ whose symplectic structure is exactly the standard Fubini-Study form $\Omega$. However, $\Omega$ has much more symmetries than just $\mathbb{T}^n$. Precisely, $U(n+1)$ acts naturally on $\mathbb{C}P^n$ in a Hamiltonian fashion and even the underlying average complex structure $J_0$ is preserved by this action, but in general it will not preserve the $J_0$-holomorphic Poisson structure $\beta$. However, it may be possible that by choosing $C$ and a non-abelian subgroup $G\subset U(n+1)$ carefully we can finally get the examples we want.

Let $0<k<n$ and $C=1/2\sum_{j,l=1}^{k-1}c_{lj}e_j\wedge e_l$, which gives rise to a toric GK structure on $\mathbb{C}^{n+1}$. We have the decomposition $\mathbb{C}^{n+1}=\mathbb{C}^k\oplus \mathbb{C}^{n+1-k}$ by writing a vector $v\in \mathbb{C}^{n+1}$ as the sum of its first $k$ components and latter $n+1-k$ components. The subgroup $U(n+1-k)\subset U(n+1)$ acts on $\mathbb{C}^{n+1}$ by fixing $\mathbb{C}^k$ and acting on $\mathbb{C}^{n+1-k}$ in the standard way. Obviously this $U(n+1-k)$-action on $\mathbb{C}^{n+1}$ commutes with the diagonal $S^1$-action on $\mathbb{C}^{n+1}$ by scaling and preserves the GK structure on $\mathbb{C}^{n+1}$. By generalized Delzant construction, we obtain a GK structure on $\mathbb{C}P^n$. The $U(n+1-k)$-action also descends to $\mathbb{C}P^n$, preserving its GK structure. Moreover, this $U(n+1-k)$-action is Hamiltonian w.r.t. the Fubini-Study form $\Omega$ on $\mathbb{C}P^n$.
\begin{proposition}The $U(n+1-k)$-action on the GK manifold $\mathbb{C}P^n$ described as above is strong Hamiltonian.
\end{proposition}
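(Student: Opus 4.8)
The plan is to establish the strong Hamiltonian property upstairs on $\mathbb{C}^{n+1}$ and then push it down to $\mathbb{C}P^n$ through the reduction by $N=S^1$, exactly as in Step~2 of the theorem of \S~\ref{gde}. The one structural fact that drives everything is that the $(\tau_0,C_0)$-GK structure on $\mathbb{C}^{n+1}$ is a \emph{product}. Indeed $\Omega_0$, the average complex structure $I_0$ and the symplectic potential $\tau_0=\frac{1}{2}\sum_j(\nu_j-\lambda_j)\ln(\nu_j-\lambda_j)$ all split coordinatewise, while $C$ is supported on the indices $\{1,\dots,k-1\}$; hence $\mathbb{C}^{n+1}$ decomposes as the product of a nontrivial toric GK factor carried by $z_1,\dots,z_{k-1}$ (where $\beta_0$, and with it $b=-\frac{1}{2}\Omega_0(J_+-J_-)$ and $\beta_1$, are concentrated) and the \emph{standard} K$\ddot{a}$hler factor carried by $z_k,\dots,z_{n+1}$, on which $\beta_1$ and $b$ vanish identically.

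Now $U(n+1-k)$ acts only on $z_{k+1},\dots,z_{n+1}$, so it operates entirely inside the K$\ddot{a}$hler factor and fixes $z_1,\dots,z_k$. First I would record two consequences. The fundamental vector fields $X_\varsigma$ ($\varsigma\in\mathfrak{u}(n+1-k)$) lie in the directions of the K$\ddot{a}$hler factor, so $(J_+-J_-)X_\varsigma=0$ and the cotangent part of the extended action vanishes, $\xi_\varsigma=-\iota_{X_\varsigma}b=0$; since the action is genuinely Hamiltonian for $\Omega_0$ and preserves the whole (product) GK structure, it is an extended Hamiltonian action in the sense of the Example of \S~\ref{str}. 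Moreover each moment map component $\mu_\varsigma$ is a quadratic function of $z_{k+1},\dots,z_{n+1}$ only, so $d\mu_\varsigma$ has no component in the directions supporting the bivector $\beta_1$. Contracting $\beta_1$ against $d\mu_\varsigma$ and $d\mu_\zeta$ then gives $\beta_1(d\mu_\varsigma,d\mu_\zeta)=0$, i.e.\ the $U(n+1-k)$-action on $\mathbb{C}^{n+1}$ is strong Hamiltonian.

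It remains to descend through the GK reduction by $N=S^1$. The sections $X_\varsigma+\xi_\varsigma=X_\varsigma$ are $N$-invariant (the two actions commute) and orthogonal to $\mathbb{K}_N$: for $\eta\in\mathfrak{n}$ one has $(X_\varsigma,X_\eta+\xi_\eta)=\xi_\eta(X_\varsigma)=-b(X_\eta,X_\varsigma)=0$, again because $X_\varsigma$ avoids the support of $b$. Hence $X_\varsigma$ descends to the reduced Courant algebroid as the generator $[X_\varsigma+\xi_\varsigma]$ of the residual $U(n+1-k)$-action, and the reduced inner product and reduced $\mathbb{J}_1$ agree on such descended sections with their counterparts upstairs. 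Using the reformulation of the strong Hamiltonian condition recalled in \S~\ref{str}, we obtain on $\mathbb{C}P^n$
\[
\beta_1(d\mu_\varsigma,d\mu_\zeta)=\bigl(\mathbb{J}_1[X_\varsigma+\xi_\varsigma],[X_\zeta+\xi_\zeta]\bigr)=\bigl(\mathbb{J}_{10}(X_\varsigma+\xi_\varsigma),X_\zeta+\xi_\zeta\bigr)=0,
\]
the final equality being the property just proved upstairs. This is exactly the defining condition for the $U(n+1-k)$-action on $\mathbb{C}P^n$ to be strong Hamiltonian.

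The only genuinely delicate point — the main obstacle — is the middle equality in the display: that $\mathbb{J}_{10}$ preserves $\mathbb{K}_N^\perp$ and that the reduction of $\mathbb{J}_{10}|_{\mathbb{K}_N^\perp}$ pairs with the reduced inner product exactly as the upstairs data pair on $N$-invariant sections of $\mathbb{K}_N^\perp$. This is not new; it is precisely the compatibility built into the GIT description of GK reduction recalled in \S~\ref{str} and already exploited in \S~\ref{gde}, so I would invoke that mechanism rather than reprove it, leaving only the elementary product/support bookkeeping above as the content specific to this proposition.
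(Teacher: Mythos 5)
Your proposal is correct, but it takes a genuinely different route from the paper's proof. The paper works entirely downstairs on $\mathbb{C}P^n$: it writes the $U(n+1-k)$ moment map explicitly in affine coordinates, observes that each component has the form $h/(1+|w|^2)$ with $h$ a Casimir of the reduced holomorphic Poisson structure $\beta=-\frac{1}{2}\sum_{j,l\leq k-1}c_{lj}w_jw_l\,\partial_{w_j}\wedge\partial_{w_l}$, kills all brackets by the Leibniz rule (the $h$'s are Casimirs and $\{f,f\}_\beta=0$ for $f=1/(1+|w|^2)$), and then passes from $\beta$ to $\beta_1$ via $\beta_1=-4\,\textup{Im}\,\beta$ (Thm.~\ref{hol}). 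You instead prove the vanishing upstairs on $\mathbb{C}^{n+1}$, where the product structure makes it manifest --- the moment components are quadratic functions of the rotated coordinates alone, with support disjoint from that of $b$ and $\beta_1$ --- and then descend through the same reduction mechanism as Step 2 of the theorem in \S~\ref{gde}. The contrast is instructive: downstairs the Fubini--Study denominator $1+|w|^2$ couples \emph{all} coordinates, so the support-disjointness you exploit upstairs genuinely fails on $\mathbb{C}P^n$, and that is exactly why the paper needs the Casimir trick; your upstairs detour avoids it. Your argument is complete at the paper's own level of rigor: the orthogonality check $(X_\varsigma, X_\eta+\xi_\eta)=-b(X_\eta,X_\varsigma)=0$ is right, and orthogonality to the remaining generators $Y_\eta=-\mathbb{J}_{10}(X_\eta+\xi_\eta)$ of the complexified distribution is automatic since $X_\varsigma$ and $Y_\eta$ are both pure tangent vectors and hence pair to zero, so your sections do lie in the full $\mathbb{K}^\perp$; and the middle equality in your display is precisely the compatibility the paper itself invokes without reproof in Step 2 of \S~\ref{gde} (and whose validity in the noncompact setting the paper addresses just before the Lemma there). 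What each approach buys: the paper's computation is self-contained, elementary, and exhibits the downstairs geometry concretely; yours is conceptual, explains \emph{why} the statement holds (disjoint supports plus functoriality of the strong Hamiltonian condition under GK reduction), and generalizes immediately to any subgroup of $U(n+1)$ commuting with $N$ whose fundamental vector fields avoid the block supporting $C$.
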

\begin{proof}It suffices to prove the result in an affine coordinate system. Let $[z_0,z_1,\cdots,z_n]$ be the homogeneous coordinates of $\mathbb{C}P^n$. In the chart $\{z_0\neq 0\}$ we use the affine coordinates
$w_j=z_j/z_0$, $j=1,2,\cdots, n$.

It is well-known that in terms of homogeneous coordinates the moment map of the $U(n+1)$-action is (see for example \cite{Kir})
\[\nu([p])=\frac{p^*p}{2\|p\|^2},\quad p=(z_0,z_1,\cdots, z_n),\]
where $p^*$ is the complex conjugate transpose of $p$ and we have used the trace to identify $\sqrt{-1}\mathfrak{u}(n+1)$ with its dual. Thus its restriction on $\mathfrak{u}(n+1-k)$ in components is
\[\mu_{lj}^r([p])=\frac{\bar{z}_jz_l+z_j\bar{z}_l}{4\|p\|^2},\quad \mu_{lj}^i([p])=\frac{\bar{z}_jz_l-z_j\bar{z}_l}{4\sqrt{-1}\|p\|^2}, \quad l, j=k,k+1,\cdots, n,\]
or in affine coordinates
\[\mu_{lj}^r([p])=\frac{\bar{w}_jw_l+w_j\bar{w}_l}{4(1+|w|^2)},\quad \mu_{lj}^i([p])=\frac{\bar{w}_jw_l-w_j\bar{w}_l}{4\sqrt{-1}(1+|w|^2)}\quad l, j=k, k+1,\cdots, n,\]
where $|w|^2=\sum_{j=1}^n|w_j|^2$ and the superscripts $r$ and $i$ denote the real and imaginary parts respectively.

Since on $\mathbb{C}^{n+1}$ the holomorphic Poisson structure $\beta_0$ is
\[\beta_0=-\frac{1}{2}\sum_{j,l=1}^{k-1}c_{lj}z_jz_l\partial_{z_j}\wedge \partial_{z_l},\]
its reduced version on $\mathbb{C}P^n$ is
\[\beta=-\frac{1}{2}\sum_{j,l=1}^{k-1}c_{lj}w_jw_l\partial_{w_j}\wedge \partial_{w_l}.\]

Note that either $\mu_{lj}^r$ or $\mu_{lj}^i$ is of the form $h/(1+|w|^2)$, where $h$ is a Casimir function of $\beta$. Then the $\beta$-Poisson bracket of any two components of the moment map $\mu$ has the form
\[h'\{\frac{1}{1+|w|^2}, \frac{1}{1+|w|^2}\}_\beta=0,\]
where $h'$ is another Casimir function. This is enough for deriving that the $\beta_1$-Poisson bracket of any two components of $\mu$ also vanishes. Our proof is thus completed.
\end{proof}

\section{Appendix}We collect some facts concerning matrices here. They are elementary but frequently (maybe implicitly) used in the main text of this paper. For a matrix $A$, let $A^T$ be its transpose and $A_s$, $A_a$ its symmetric and anti-symmetric parts respectively. In the following, let $A$ be an $n\times n$ invertible matrix and $B$ its inverse.

\textbf{Fact I}. \[A_sB_s+A_aB_a=B_sA_s+B_aA_a=\textup{I},\]
\[A_sB_a+A_aB_s=B_sA_a+B_aA_s=0.\]

\textbf{Fact II.} $A_s$ is invertible if and only if $B_s$ is invertible; in particular, if $A_s$ is positive-definite, then so is $B_s$.

\textbf{Fact III}. $AB_sA^T=A_s$ and $AB_aA^T=-A_a$. In particular, we have \[B^T(B_s)^{-1}B=(A_s)^{-1}.\]
\section*{Acknowledgemencts}
This study is supported by the Natural Science Foundation of Jiangsu Province (BK20150797). The manuscript is prepared during the author's stay in the Department of Mathematics at the University of Toronto and this stay is funded by the China Scholarship Council (201806715027). The author also thanks Professor Marco Gualtieri for his invitation and hospitality.

\end{document}